\definecolor{Red}{cmyk}{0,1,1,0}
\definecolor{verde}{cmyk}{1,0,1,0}
\definecolor{loka}{cmyk}{.5,0,1,.5}
\definecolor{azul}{cmyk}{1,1,0,0}
\numberwithin{equation}{section}
\newcommand{\be}{\begin{equation}}
\newcommand{\ee}{\end{equation}}
\newtheorem{definition}{Definition}
\newtheorem{lemma}{Lemma}
\newtheorem{teorema}{Theorem}
\begin{document}
\title{Existence of mild solutions to Hilfer fractional evolution equations in Banach space}
\author{J. Vanterler da C. Sousa}
\address{ Department of Applied Mathematics, Institute of Mathematics,
 Statistics and Scientific Computation, University of Campinas --
UNICAMP, rua S\'ergio Buarque de Holanda 651,
13083--859, Campinas SP, Brazil\newline
e-mail: {\itshape \texttt{ra160908@ime.unicamp.br}}}

\begin{abstract} In this paper, we investigate the existence of mild solutions to Hilfer fractional equation of semi-linear evolution with non-instantaneous impulses, using the concepts of equicontinuous $C_{0}$-semigroup and Kuratowski measure of non-compactness in Banach space $\Omega$.

\vskip.5cm
\noindent
\emph{Keywords}: Hilfer fractional evolution equations, non-instantaneous impulses, mild solution, existence, equicontinuous $C_{0}$-semigroup, Kuratowski measure of non-compactness.
\newline 
%MSC 2010 subject classifications. 26A33, 34A08, 34A12, 34K20, 37C25 .
\end{abstract}
\maketitle

%%%%%%%%%%%%%%%%%%%%%%%%%%%%%%%%%%%%%%%%%%%%%%%%%%%%%%%%%%%%%%%%%%%%%%%%%%%%%%%%%%%%%%%%%%%%%%%%%%%%%%%%%%%%%%%%%%%%%%%%%%%%%%%%%%%%%%%%%%%%%%%%%%%%%%%%%%%%%

\section{Introduction}

We can start the paper with the following questions: Why study fractional calculus? What are the advantages we gain by investigating and proposing new results in the field of fractional calculus? Are the results presented so far important and relevant to the point of contributing to the scientific community? In a simple and clear answer, it is enough to notice the exponential scientific growth in the area and the impact that the fractional calculus has contributed to mathematics and other diverse sciences, specially, in a shocking way in the context of mathematical modeling \cite{KSTJ,ECJT,der1,appl,apll1,der2,saba,mainardi}.

The theory of differential equations with non-instantaneous impulses and impulsive evolution equations in Banach spaces has been investigated by many researchers in the last decades \cite{lakshmikantham,benchohra,samoilenko}. It is note that investigating the existence, uniqueness, stability of solutions of differential equations of evolution, has been object of study and applicability in the scientific community, since it describes processes that experience a sudden change in their states at certain moments \cite{principal,med,haoa,mild,chen2017}. The applicability of the obtained results related to the differential equations, especially with non-instantaneous impulses, can be found in several areas, such as: physics, engineering, economics, biology, medicine and mathematics itself, among others \cite{lakshmikantham,benchohra}.

In 2013 Pierri et al. \cite{pierri}, investigated the existence of solutions of a class of abstract semi-linear differential equations with non-instantaneous impulses using the semigroup analytic theory. In the same year, Hernandez and O'Regan \cite{hernandez}, investigated the existence of solutions of a new class of impulsive abstract differential equations with non-instantaneous impulses. In this sense, Zhang and Li \cite{chen}, by means of monotone iterative technique and operator semigroup theorem, investigated the existence of mild solutions of a class semi-linear evolution equations, also with non-instantaneous impulses in the space of Banach. So it is noted that the study on the subject is indeed important and motivating for the researchers.

Therefore, many researchers, specifically, from the fractional analysis group, had the motivation, based on tools and new results from the fractional calculus, to investigate the essential properties of solutions of fractional differential equations, from existence, uniqueness, Ulam-Hyers stabilities, controllability, among others, thus providing an exponential growth of new results and strengthening the area \cite{est1,est2,est3,est4,est5,est6,attrac,contr,contr1,contr2}.

In 2015 Gu and Trujillo \cite{exist2}, were concerned to focus on the investigation of the existence of mild solutions of the evolution of fractional equation order in the sense of Hilfer fractional derivative, using noncompactness measure in the space of Banach $X$. On the other hand, Fu and Huang \cite{deca}, investigated the existence and regularity of solutions for a neutral functional integro-differential equation with state-dependent delay in Banach space, using the fixed-point theorem of Sadovskii under compactness condition for the resolvent operator and under Hölder continuity condition.

In order to propose new results that contribute to the field of fractional differential equations, Mu \cite{exist4} decided to investigate the existence of a mild solution for the impulsive fractional evolution equation given by
\begin{equation*}
\left\{ 
\begin{array}{rll}
D_{0+}^{\alpha}u\left( t\right)+ Au(t)& = &f\left(t,u\left( t\right) \right) ,\,\,\,\,\,\text{ }t\in I:=[0,T], t\neq t_{k}\\ 
\Delta u |_{t=t_{k}} & = & I_{k}(u(t_{k})) ,\,\,\,\,\,\text{ } k=1,2,...,m \\ 
u\left( 0\right)+g(u) & = & x_{0}\in X%
\end{array}%
\right.
\end{equation*}
where $D_{0+}^{\alpha }(\cdot)$ is Caputo fractional derivative with $0<\alpha<1$, $A: D(A)\subset X: \rightarrow X$ is a linear closed densely defined operator, –$A$ is the infinitesimal generator of an analytic semigroup of uniformly bounded linear operators $(T(t))_{t\geq 0}$, $0=t_{0}<t_{1}<t_{2}<\cdots <t_{m}<t_{m+1}=T$, $f:I \times  \rightarrow X$ is continuous, $g:PC(I,X)\rightarrow X$ is continuous, the impulsive function $I_{k}: X \rightarrow X$ is continuous, $\Delta u |_{t=t_{k}}=u(t^{+})-u(t^{-})$, where $u(t^{+})$, $u(t^{-})$ represent the right and left limits of $u(t)$ at $t = t_{k}$, respectively. For a brief reading on others of existence and uniqueness of solutions of fractional differential equations, we suggest \cite{exist,exist3,exist6,gou,est7}.

In this sense, inspired by the works proposed up to here, we note the necessity and importance of the realization of a work that will contribute to the existence of solutions of fractional differential equations. Then, we consider the semi-linear evolution fractional differential equation with non-instantaneous impulses in Banach space $\Omega$, given by
\begin{equation}\label{eq1}
\left\{ 
\begin{array}{rll}
^{H}\mathbb{D}_{0+}^{\alpha ,\beta }u\left( t\right) +\mathcal{A}\left( t\right) & = & f\left(
t,u\left( t\right) \right) ,\,\,\,\,\,\text{ }t\in \overset{m}{\underset{k=0}{\bigcup }%
}\left( s_{k},t_{k+1}\right] \\ 
u\left( t\right) & = & \zeta _{k}\left( t,u\left( t\right) \right) ,\,\,\,\,\,\text{ }%
t\in \overset{m}{\underset{k=1}{\bigcup }}\left( t_{k},s_{k}\right] \\ 
I_{0+}^{1-\gamma }u\left( 0\right) & = & u_{0}%
\end{array}%
\right.
\end{equation}
where $^{H}\mathbb{D}_{0+}^{\alpha ,\beta }\left( \cdot \right) $ is Hilfer fractional derivative, $I_{0+}^{1-\gamma }\left( \cdot \right) $ is Riemann-Liouville fractional integral with $0<\alpha \leq 1$, $0\leq \beta \leq 1$ and $0\leq \gamma \leq 1\left( \gamma =\alpha +\beta \left( 1-\alpha \right) \right)$, $\mathcal{A}:\mathfrak{D}\left( \mathcal{A}\right) \subset \Omega \rightarrow \Omega $ is a linear operator and is the infinitesimal generator of a strongly continuous semigroup $\left( C_{0}-\text{semigroup}\right) $ $\left( \mathbb{P}\left(t\right) \right) _{t\geq 0}$ in $\Omega $ with $0<t_{1}<t_{2}<\cdot \cdot \cdot <t_{m}<t_{m+1}:=a,$ $a>0$ is a constant, $s_{0}:=0$ and $s_{k}\in \left( t_{k},t_{k+1}\right) $ for each $k=1,2,...,m$. We also have $f:\left[ 0,a\right] \times \Omega \rightarrow \Omega $ a given nonlinear function satisfying some assumptions $\zeta_{k}:\left( t_{k},s_{k}\right] \times \Omega \rightarrow \Omega $ is non-instantaneous impulsive function for all $k=1,2,...m,$ and $u_{0}\in \Omega$.

We highlight here a rigorous analysis of Eq.(\ref{eq1}) regarding the main results and advantages obtained in this paper:

\begin{itemize}

\item From the limits $\beta \rightarrow 0$ and $\beta \rightarrow 1$ in Eq.(\ref{eq1}), we obtain the respective special cases for the differential equations, that is, the classical fractional derivatives of Riemann-Liouville and Caputo, respectively. In addition to the integer case, by choosing $\alpha = 1 $;

\item An important and relevant factor are the properties of the Hilfer fractional derivative, since they are preserved for their particular cases. In this sense, when investigating a particular property of a fractional differential equation and obtaining a particular case for the derivative, the properties of the differential equation are preserved, in this case, the existence of mild solutions;

\item We present a new class of solutions for differential equation of semi-linear evolution with non-instantaneous impulses by means of the Hilfer fractional derivative;

\item We use the concepts of $C_{0}$-semigroup equicontinuous and Kuratowski measure of noncompactness $\mu(\cdot)$, among others that in the preliminary section are presented, in particular, Lebesgue dominated convergence theorem, and investigate the existence of mild solutions of Eq.(\ref{eq1}) in the space $\mathbf{PC}_{1-\gamma}(J,\Omega)$;

\item Since many applications are performed by means of differential equations with non-instantaneous impulses, specifically in biology and medicine; and by the enormous amount of parameters that appear when using differential equations to model a given problem, one way to overcome a certain barrier is to propose more general differential equations. In this case, one way is to use the more general fractional derivatives and here, we use the Hilfer fractional derivative. Although there are other fractional derivatives, a special emphasis on the $\psi$-Hilfer fractional derivative, we restrict this work to the Hilfer fractional derivative. In this sense, the result obtained here may also contribute to future applications.

\end{itemize}

This paper is organized as follows. In section 2 we present the space of the weighted functions and their respective norm, as well as the concepts of Hilfer fractional derivative. The concepts of $C_{0}$-semigroup equicontinuous, Kuratowski measure of noncompactness $\mu(\cdot)$, mild solution and Lemmas results that are of paramount importance throughout the paper are also presented. In section 3, the main result of the paper is investigated, the existence of mild solution of the semi-linear evolution fractional differential equation with non-instantaneous impulses in the Banach $\Omega$ space, making use of refined mathematical analysis tools, in particular, of Lebesgue dominated convergence theorem. Concluding remarks close the paper.

%%%%%%%%%%%%%%%%%%%%%%%%%%%%%%%%%%%%%%%%%%%%%%%%%%%%%%%%%%%%%%%%%%%%%%%%%%%%%%%%%%%%%%%%%%%%%%%%%%%%%%%%%%%%%%%%%%%%%%%
%%%%%%%%%%%%%%%%%%%%%%%%%%%%%%%%%%%%%%%%%%%%%%%%%%%%%%%%%%%%%%%%%%%%%%%%%%%%%%%%%%%%%%%%%%%%%%%%%%%%%%%%%%%%%%%%%%%%%%%
%%%%%%%%%%%%%%%%%%%%%%%%%%%%%%%%%%%%%%%%%%%%%%%%%%%%%%%%%%%%%%%%%%%%%%%%%%%%%%%%%%%%%%%%%%%%%%%%%%%%%%%%%%%%%%%%%%%%%%%

\section{Preliminaries}

In this section, some definitions and results are presented through Lemmas, essential for the development of the paper.

The space of continuous function $C\left( J,\mathbb{R}\right) $ $(J:=[0,a])$ with norm is given by \cite{est1}
\begin{equation*}
\left\Vert u\right\Vert =\underset{t\in J}{\sup }\left\Vert u\left( t\right)
\right\Vert .
\end{equation*}

On the other hand, the weighted space of functions $u$ on $J^{\prime }:=\left( 0,a\right] $ is
defined by
\begin{equation*}
C_{1-\gamma }\left( J,\Omega \right) =\left\{ u\in C\left( J^{\prime },\Omega \right) ,\text{ }t^{1-\gamma }u\left( t\right) \in C\left( J,\Omega \right) \right\}
\end{equation*}
where $0\leq \gamma \leq 1,$ with the norm
\begin{equation*}
\left\Vert u\right\Vert _{C_{1-\gamma }}=\underset{t\in J^{\prime }}{\sup }\left\Vert t^{1-\gamma }u\left( t\right) \right\Vert
\end{equation*}
obviously, the space $C_{1-\gamma }\left( J,\Omega \right) $ is a Banach space. We now, present the definition piecewise space of functions $u$ on $\mathbf{PC}_{1-\gamma }\left( \left( t_{k},t_{k+1}\right] ,\Omega \right)$, given by \cite{est6}
\begin{equation*}
\mathbf{PC}_{1-\gamma }\left( J,\Omega \right) =\left\{ 
\begin{array}{c}
\left( t-t_{k}\right) ^{1-\gamma }u\left( t\right) \in C_{1-\gamma }\left(
\left( t_{k},t_{k+1}\right],\mathbb{R}\right) \\ 
\underset{t\rightarrow t_{k}}{\lim }\left( t-t_{k}\right) ^{1-\gamma
}u\left( t\right) ,\text{ and exists for }k=1,...,m
\end{array}
\right\}
\end{equation*}
where the norm is given by
\begin{equation*}
\left\Vert u\right\Vert _{\mathbf{PC}_{1-\gamma }}=\max \left\{ \underset{t\in J}{%
\sup }\left\Vert t^{1-\gamma }u\left( t^{+}\right) \right\Vert ,\underset{%
t\in J}{\sup }\left\Vert t^{1-\gamma }u\left( t^{-}\right) \right\Vert
\right\}
\end{equation*}
where $u\left( t^{+}\right) $ and $u\left( t^{-}\right) $ represent
respectively the right and left limits of $u\left( t\right) $ at $t\in J.$

Consider the following set
\begin{equation*}
\Lambda _{r}=\left\{ u\in \mathbf{PC}_{1-\gamma }\left( J,\Omega \right) ,\text{ }%
\left\Vert u\left( t\right) \right\Vert _{C_{1-\gamma }}<r,\text{ }t\in
J\right\}.
\end{equation*}
Note that, for each finite constant $r>0$, $\Lambda _{r}$ is bounded, closed and convex set in $\mathbf{PC}_{1-\gamma }\left( J,\Omega \right) $.

We denote by $\mathcal{L}\left( \Omega \right) $ be the Banach space for all linear and bounded
operators on $\Omega$. Since the semigroup $\left( \mathbb{P_{\alpha,\beta}}\left( t\right) \right)
_{t\geq 0}$ generalized by $(-\mathcal{A})$ is a $C_{0}$-semigroup in $\Omega $, denote
\begin{equation}\label{eq2}
\mathbf{M}:=\sup \left\Vert \mathbb{P}_{\alpha ,\beta }\left( t\right) \right\Vert _{\mathcal{L}\left(
\Omega \right) }
\end{equation}
then $\mathbf{M}\geq 1$ is a finite number.

On the other hand, let $n-1<\alpha \leq n$ with $n\in \mathbb{N}$ and $f,\psi \in C^{n}\left(J,\mathbb{R}\right) $ be two functions such that $\psi $ is increasing and $\psi ^{\prime }\left( t\right) \neq 0$, for all $t\in J.$ The left-sided $\psi$-Hilfer fractional derivative $^{H}\mathbb{D}_{0+}^{\alpha ,\beta }\left( \cdot \right) $ of a function $f$ of order $\alpha $ and type $0\leq \beta \leq 1$ is defined by \cite{der1}
\begin{equation*}
^{H}\mathbb{D}_{0+}^{\alpha ,\beta }u\left( t\right) =I_{0+}^{\beta \left( n-\alpha
\right) ;\psi }\left( \frac{1}{\psi ^{\prime }\left( t\right) }\frac{d}{dt}%
\right) ^{n}I_{0+}^{\left( 1-\beta \right) \left( n-\alpha \right) ;\psi
}u\left( t\right),
\end{equation*}
where $I^{\alpha}_{0+}(\cdot)$ is $\psi$-Riemann-Liouville fractional integral. The right-left  $\psi$-Hilfer fractional derivative $^{H}\mathbb{D}_{0-}^{\alpha ,\beta }\left( \cdot \right) $ is defined in an analogous way \cite{der1}. 

Choosing $\psi \left( t\right) =t$, we have the Hilfer fractional derivative, given by 
\begin{equation}\label{eq4}
^{H}\mathbb{D}_{0+}^{\alpha ,\beta }u\left( t\right) =I_{0+}^{\beta \left( n-\alpha
\right) }\left( \frac{d}{dt}\right) ^{n}I_{0+}^{\left( 1-\beta \right)
\left( n-\alpha \right) }u\left( t\right) .
\end{equation}

For the development of this paper, we use the Hilfer fractional derivative, Eq.(\ref{eq4}). The following are some fundamental concepts and results for obtaining the principal of this paper.

\begin{definition}{\rm \cite{principal,med}}\label{def1} A $C_{0}$-semigroup $\left( \mathbb{P}\left( t\right) \right) _{t\geq 0}$ in $\Omega $ is said to be equicontinuous if $\mathbb{P}\left( t\right) $ is continuous by operator for every $t>0$.
\end{definition}

\begin{definition}{\rm\cite{principal,med}}\label{def2} The Kuratowski measure of non-compactness $\mu \left( \cdot
\right) $ defined on bounded set $S$ of Banach space $\Omega $ is
\begin{equation*}
\mu \left( \mathbf{S}\right) :=\inf \left\{ \delta >0:\mathbf{S}=\overset{m}{\underset{i=1}{%
\bigcup }}\mathbf{S}_{i}with\,\, {\rm{diam}}\left( S_{i}\right) \leq \delta ,\text{ for }%
i=1,2,...,m\right\} .
\end{equation*}
\end{definition}

The following properties about the Kuratowski measure of non-compactness are
well known.

\begin{lemma}{\rm\cite{principal,med}}\label{lem1} Let $\Omega $ be a Banach space and $\mathbf{S},\mathbf{U}\subset \Omega $ be bounded.
The following properties are satisfied:
\begin{enumerate}
\item  $\mu \left( \mathbf{S}\right) =0$ if and only if $\overline{\mathbf{S}}$ is compact, where 
$\overline{\mathbf{S}}$ means the closure hull of $\mathbf{S}$;

\item $\mu \left( \mathbf{S}\right) =\mu \left( \overline{\mathbf{S}}\right) =\mu \left( conv\text{ }\mathbf{S}\right) $, where $conv$ $\mathbf{S}$ means the convex hull of $\mathbf{S}$;

\item $\mu \left( \lambda \mathbf{S}\right) =\left\vert \lambda \right\vert \mu \left(\mathbf{S}\right) $ for any $\lambda \in \mathbb{R};$

\item $\mathbf{S}\subset U$ implies $\mu \left( \mathbf{S}\right) \leq \mu \left( \mathbf{U}\right) ;$

\item $\mu \left( \mathbf{S}\cup \mathbf{U}\right) =\max \left\{ \mu \left( \mathbf{S}\right) ,\mu \left(
\mathbf{U}\right) \right\} ;$

\item $\mu \left( \mathbf{S}+\mathbf{U}\right) \leq \mu \left( \mathbf{S}\right) +\mu \left( \mathbf{U}\right) ,$
where $\mathbf{S}+\mathbf{U}=\left\{ x/x=y+z,\text{ }y\in \mathbf{S},\text{ }z\in \mathbf{U}\right\} ;$

\item If $\mathbf{Q}:\mathfrak{D}\left( \mathbf{Q}\right) \subset \Omega \rightarrow \Omega $ is Lipschitz continuous with constant $k,$ then $\mu \left( \mathbf{Q}\left( \mathbf{V}\right) \right) \leq k\mu \left( \mathbf{V}\right) $ for any bounded subset $\mathbf{V}\subset \mathfrak{D}\left( \mathbf{Q}\right) ,$ where $\Omega $ is another Banach space.
\end{enumerate}
\end{lemma}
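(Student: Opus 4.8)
The plan is to derive each property directly from the defining infimum over finite covers by sets of small diameter, disposing of the elementary properties first and reserving the convex-hull identity for last, as it is the only genuinely delicate point.

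I would begin with monotonicity (4): if $\mathbf{S}\subset\mathbf{U}$ and $\mathbf{U}=\bigcup_{i=1}^m \mathbf{U}_i$ with $\mathrm{diam}(\mathbf{U}_i)\le\delta$, then $\mathbf{S}=\bigcup_{i=1}^m(\mathbf{S}\cap\mathbf{U}_i)$ is a cover of $\mathbf{S}$ by sets of diameter $\le\delta$, so the infimum defining $\mu(\mathbf{S})$ ranges over a richer family and $\mu(\mathbf{S})\le\mu(\mathbf{U})$. Property (5) then follows: $\max\{\mu(\mathbf{S}),\mu(\mathbf{U})\}\le\mu(\mathbf{S}\cup\mathbf{U})$ is immediate from (4), while juxtaposing a $\delta_1$-cover of $\mathbf{S}$ with a $\delta_2$-cover of $\mathbf{U}$ yields a cover of $\mathbf{S}\cup\mathbf{U}$ of mesh $\max\{\delta_1,\delta_2\}$, giving the reverse inequality. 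Property (3) is proved by scaling a cover of $\mathbf{S}$ by $\lambda$ (and back by $1/\lambda$ when $\lambda\ne 0$), since $\mathrm{diam}(\lambda\mathbf{S}_i)=|\lambda|\,\mathrm{diam}(\mathbf{S}_i)$, the case $\lambda=0$ being trivial. Property (6) comes from writing $\mathbf{S}+\mathbf{U}=\bigcup_{i,j}(\mathbf{S}_i+\mathbf{U}_j)$ and using $\mathrm{diam}(\mathbf{S}_i+\mathbf{U}_j)\le\mathrm{diam}(\mathbf{S}_i)+\mathrm{diam}(\mathbf{U}_j)$. Property (7) is analogous: $\mathbf{Q}(\mathbf{V})=\bigcup_i\mathbf{Q}(\mathbf{V}_i)$ with $\mathrm{diam}(\mathbf{Q}(\mathbf{V}_i))\le k\,\mathrm{diam}(\mathbf{V}_i)\le k\delta$, whence $\mu(\mathbf{Q}(\mathbf{V}))\le k\mu(\mathbf{V})$.

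For property (1), if $\overline{\mathbf{S}}$ is compact then for each $\delta>0$ total boundedness yields a finite cover of $\mathbf{S}$ by balls of radius $\delta/2$, so $\mu(\mathbf{S})\le\delta$ for every $\delta$ and hence $\mu(\mathbf{S})=0$; conversely $\mu(\mathbf{S})=0$ produces, for each $n$, a finite cover of $\mathbf{S}$ by sets of diameter $\le 1/n$, so $\mathbf{S}$ is totally bounded, and since $\Omega$ is complete, $\overline{\mathbf{S}}$ is complete and totally bounded, hence compact. In property (2), the equality $\mu(\mathbf{S})=\mu(\overline{\mathbf{S}})$ follows from (4) together with $\mathrm{diam}(\overline{\mathbf{S}_i})=\mathrm{diam}(\mathbf{S}_i)$, so any $\delta$-cover of $\mathbf{S}$ gives a $\delta$-cover $\{\overline{\mathbf{S}_i}\}$ of $\overline{\mathbf{S}}$. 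The crux is $\mu(\mathrm{conv}\,\mathbf{S})=\mu(\mathbf{S})$; here $\mu(\mathbf{S})\le\mu(\mathrm{conv}\,\mathbf{S})$ is again (4), and the reverse is the hard part. Fix a cover $\mathbf{S}=\bigcup_{i=1}^m\mathbf{S}_i$ with $\mathrm{diam}(\mathbf{S}_i)\le\delta$ and set $C_i=\overline{\mathrm{conv}}\,\mathbf{S}_i$, so each $C_i$ is convex, bounded (within a ball of radius $R$), and still of diameter $\le\delta$. Since the $C_i$ are convex, every point of $\mathrm{conv}(\bigcup_i C_i)$ has the form $\sum_{i=1}^m\lambda_i c_i$ with $c_i\in C_i$ and $\lambda=(\lambda_1,\dots,\lambda_m)$ in the simplex $\Delta=\{\lambda:\lambda_i\ge0,\ \sum_i\lambda_i=1\}$. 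I would partition the compact simplex $\Delta$ into finitely many pieces $\Delta_1,\dots,\Delta_p$ of $\ell^1$-diameter $<\eta$, choose a representative $\lambda^{(k)}\in\Delta_k$, and set $B_k=\sum_{i=1}^m\lambda_i^{(k)}C_i$. Each $B_k$ satisfies $\mathrm{diam}(B_k)\le\sum_i\lambda_i^{(k)}\mathrm{diam}(C_i)\le\delta$, and for $x=\sum_i\lambda_i c_i$ with $\lambda\in\Delta_k$ one has $\|x-\sum_i\lambda_i^{(k)}c_i\|\le R\sum_i|\lambda_i-\lambda_i^{(k)}|<R\eta$, so $\mathrm{conv}(\bigcup_i C_i)$ is covered by the finitely many sets of diameter $\le\delta+2R\eta$ obtained by enlarging each $B_k$. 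Letting $\eta\to0$ and taking the infimum over covers yields $\mu(\mathrm{conv}\,\mathbf{S})\le\mu(\mathbf{S})$.

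The main obstacle is precisely this last estimate: controlling the convex hull requires the simplex-partition argument above, which relies on the boundedness of $\mathbf{S}$ (to bound $\|c_i\|\le R$) and on the compactness of $\Delta$ to produce a finite mesh. All the remaining properties are routine manipulations of finite coverings.
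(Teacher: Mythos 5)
Your proof is correct, but there is nothing in the paper to compare it against: the paper does not prove this lemma at all, it states the properties as ``well known'' and quotes them from the cited references. What you have reconstructed is the classical argument (essentially Kuratowski's, with the convex-hull invariance going back to Darbo): properties (3)--(7) by direct manipulation of finite covers, property (1) via total boundedness plus completeness of $\Omega$, the closure part of (2) from $\mathrm{diam}(\overline{\mathbf{S}_i})=\mathrm{diam}(\mathbf{S}_i)$, and the genuinely delicate inequality $\mu(\mathrm{conv}\,\mathbf{S})\le\mu(\mathbf{S})$ via the simplex-partition argument. That last argument is sound as written: the regrouping of convex combinations so that every point of $\mathrm{conv}\bigl(\bigcup_i C_i\bigr)$ has the form $\sum_i\lambda_i c_i$ with $c_i\in C_i$ and $\lambda$ in the simplex, the finite mesh partition of the compact simplex, the estimate $\mathrm{diam}(B_k)\le\sum_i\lambda_i^{(k)}\mathrm{diam}(C_i)\le\delta$, and the perturbation bound $R\eta$ together give covers of mesh $\delta+2R\eta$, after which $\eta\to 0$ and the infimum over covers finish the proof. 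The one step you use silently is that passing to closed convex hulls does not increase diameter, i.e. $\mathrm{diam}(\overline{\mathrm{conv}}\,\mathbf{S}_i)=\mathrm{diam}(\mathbf{S}_i)$; this is elementary (two applications of $\|\sum_j\lambda_j a_j-b\|\le\max_j\|a_j-b\|$ plus continuity of the norm), but it is precisely what makes your sets $C_i$ admissible, so it merits an explicit line.
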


Some notations are necessary in order to facilitate and for the better development of the results during the paper. Among these, we denote by $\mu \left( \cdot \right)$, $\mu _{C_{1-\gamma }}\left( \cdot \right) $ and $\mu _{\mathbf{PC}_{1-\gamma }}\left( \cdot \right) $ the Kuratowski measure of non-compactness, on the bounded set of $\Omega $, $C_{1-\gamma }\left( J,\Omega \right) $ and $\mathbf{PC}_{1-\gamma }\left( J,\Omega \right) $, respectively. 

An important Kuratowski property measure of non-compactness is as follows: for any $\mathfrak{D}\subset C_{1-\gamma }\left( J,\Omega \right) $ and $t\in J,$ set $\mathfrak{D}\left( t\right) =\left\{ u\left( t\right) /u\in \mathfrak{D}\right\} $ then $\mathfrak{D}\left( t\right) \subset \Omega.$ If $\mathfrak{D}\subset C_{1-\gamma }\left( J,\Omega \right) $ is bounded, then $\mathfrak{D}\left( t\right) $ is bounded in $\Omega $ and $\mu \left( \mathfrak{D}\left( t\right) \right) \leq \mu _{C_{1-\gamma }}\left( \mathfrak{D}\right)$. 

\begin{definition}{\rm\cite{principal,med}}\label{def3} Let $\Omega $ be a Banach space, and let $\mathbf{S}$ be a nonempty subset of $\Omega .$\ A continuous operator $\mathbf{Q}:\mathbf{S}\rightarrow \Omega $ is called to be $k-$set-contractive if there exists a constant $k\in \left[ 0,1\right) $ such that, for every bounded set $\widetilde{\Omega }\subset \mathbf{S}$ 
\begin{equation*}
\mu \left( \mathbf{Q}\left( \widetilde{\Omega }\right) \right) \leq k\mu \left( 
\widetilde{\Omega }\right) .
\end{equation*}
\end{definition}

\begin{lemma}{\rm\cite{principal,med}}\label{lem2} Assume that $\widetilde{\Omega }\subset \Omega $ is a bounded closed and convex set on $\Omega $ ($\Omega $ is Banach space), the operator $\mathbf{Q}:\widetilde{\Omega }\rightarrow \widetilde{\Omega }$ is $k$-set-contractive. Then $Q$ has at least one fixed point in $\widetilde{\Omega }$.
\end{lemma}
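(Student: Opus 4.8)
The plan is to prove this statement, which is the classical Darbo fixed point theorem, by constructing a nested sequence of sets on which the Kuratowski measure decays geometrically and then invoking Schauder's theorem on the limiting compact convex set. First I would set $\widetilde{\Omega}_{0}:=\widetilde{\Omega}$ and define inductively $\widetilde{\Omega}_{n+1}:=\overline{conv}\,\mathbf{Q}(\widetilde{\Omega}_{n})$, the closed convex hull of the image. Since $\mathbf{Q}(\widetilde{\Omega})\subset\widetilde{\Omega}$ and $\widetilde{\Omega}$ is closed and convex, one gets $\widetilde{\Omega}_{1}\subset\widetilde{\Omega}_{0}$; arguing by induction and using the monotonicity of the image and closed-convex-hull operations (if $\widetilde{\Omega}_{n}\subset\widetilde{\Omega}_{n-1}$ then $\mathbf{Q}(\widetilde{\Omega}_{n})\subset\mathbf{Q}(\widetilde{\Omega}_{n-1})$, hence $\widetilde{\Omega}_{n+1}\subset\widetilde{\Omega}_{n}$) yields that $(\widetilde{\Omega}_{n})_{n\geq0}$ is a decreasing sequence of nonempty, bounded, closed and convex sets, each satisfying $\mathbf{Q}(\widetilde{\Omega}_{n})\subset\widetilde{\Omega}_{n+1}\subset\widetilde{\Omega}_{n}$.

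Next I would control the measure of noncompactness along this sequence. Using item (2) of Lemma \ref{lem1} (invariance of $\mu$ under closure and convex hull) together with the $k$-set-contractivity of $\mathbf{Q}$ from Definition \ref{def3}, I obtain
\begin{equation*}
\mu\!\left(\widetilde{\Omega}_{n+1}\right)=\mu\!\left(\overline{conv}\,\mathbf{Q}(\widetilde{\Omega}_{n})\right)=\mu\!\left(\mathbf{Q}(\widetilde{\Omega}_{n})\right)\leq k\,\mu\!\left(\widetilde{\Omega}_{n}\right),
\end{equation*}
so that $\mu(\widetilde{\Omega}_{n})\leq k^{n}\mu(\widetilde{\Omega}_{0})$. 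Since $k\in[0,1)$ and $\mu(\widetilde{\Omega}_{0})<\infty$ (as $\widetilde{\Omega}$ is bounded), this forces $\mu(\widetilde{\Omega}_{n})\to0$ as $n\to\infty$.

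I would then pass to the limit set $\widetilde{\Omega}_{\infty}:=\bigcap_{n=0}^{\infty}\widetilde{\Omega}_{n}$. By the generalized Cantor intersection property of the Kuratowski measure — a decreasing sequence of nonempty closed bounded sets whose measures tend to zero has nonempty intersection — the set $\widetilde{\Omega}_{\infty}$ is nonempty, and since $\mu(\widetilde{\Omega}_{\infty})\leq\mu(\widetilde{\Omega}_{n})\to0$, item (1) of Lemma \ref{lem1} shows it is compact; it is also convex, being an intersection of convex sets. Moreover $\mathbf{Q}$ maps $\widetilde{\Omega}_{\infty}$ into itself: for each $n$ we have $\mathbf{Q}(\widetilde{\Omega}_{\infty})\subset\mathbf{Q}(\widetilde{\Omega}_{n})\subset\widetilde{\Omega}_{n+1}$, and intersecting over $n$ gives $\mathbf{Q}(\widetilde{\Omega}_{\infty})\subset\widetilde{\Omega}_{\infty}$.

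Finally, the restriction of $\mathbf{Q}$ to $\widetilde{\Omega}_{\infty}$ is a continuous self-map of a nonempty compact convex subset of the Banach space $\Omega$, so Schauder's fixed point theorem furnishes a point $u\in\widetilde{\Omega}_{\infty}\subset\widetilde{\Omega}$ with $\mathbf{Q}u=u$, completing the argument. The main obstacle I anticipate is this limit step: justifying that $\widetilde{\Omega}_{\infty}$ is nonempty and compact relies on the generalized Cantor intersection theorem, which is not among the properties of $\mu$ listed in Lemma \ref{lem1} and would have to be invoked (or established) separately, as would Schauder's theorem; by contrast the geometric decay estimate and the verification that $\mathbf{Q}$ preserves $\widetilde{\Omega}_{\infty}$ are routine once those two tools are available.
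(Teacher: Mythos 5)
Your proposal is correct, but there is nothing in the paper to compare it against: the paper states this lemma (which is Darbo's fixed point theorem) as a known result imported from the cited references, and gives no proof of it. Your argument is the classical one: the nested sequence $\widetilde{\Omega}_{0}=\widetilde{\Omega}$, $\widetilde{\Omega}_{n+1}=\overline{conv}\,\mathbf{Q}(\widetilde{\Omega}_{n})$ is decreasing and $\mathbf{Q}$-invariant; the identity $\mu(\overline{conv}\,\mathbf{Q}(\widetilde{\Omega}_{n}))=\mu(\mathbf{Q}(\widetilde{\Omega}_{n}))$ follows from item (2) of Lemma \ref{lem1} applied twice (closure and convex hull), and $k$-set-contractivity then gives $\mu(\widetilde{\Omega}_{n})\leq k^{n}\mu(\widetilde{\Omega}_{0})\rightarrow 0$; since each $\widetilde{\Omega}_{n}$ is nonempty (by induction, as $\mathbf{Q}$ maps nonempty sets to nonempty sets), the intersection $\widetilde{\Omega}_{\infty}$ is nonempty, closed, convex, compact, and $\mathbf{Q}$-invariant; continuity of $\mathbf{Q}$ (which is part of Definition \ref{def3}) plus Schauder's theorem finishes the proof. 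You correctly flag the two ingredients that lie outside the toolkit the paper lists: the generalized Cantor intersection theorem for $\mu$ (Kuratowski's theorem, needed to get $\widetilde{\Omega}_{\infty}\neq\emptyset$ and compact, with compactness of the closed set $\widetilde{\Omega}_{\infty}$ then following from item (1) of Lemma \ref{lem1}) and Schauder's fixed point theorem; both are standard and would have to be cited or proved separately, exactly as you say. In short: the proof is sound and complete modulo those two classical facts, and it supplies a proof where the paper supplies only a citation.
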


\begin{lemma}{\rm\cite{principal,med}}\label{lem3} Let $\Omega $ be a Banach space, and let $\mathfrak{D}\subset \Omega $ be bounded. Then there exits a countable set $\mathfrak{D}_{0}\subset \mathfrak{D}$, such that $\mu \left( \mathfrak{D}\right) \leq 2\mu \left( \mathfrak{D}_{0}\right)$.
\end{lemma}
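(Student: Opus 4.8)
The plan is to produce $\mathfrak{D}_0$ by a greedy separation argument, exploiting the elementary fact that a covering by balls of radius $r$ consists of sets of diameter at most $2r$; this is precisely where the factor $2$ enters. Write $d := \mu(\mathfrak{D})$. If $d = 0$ the inequality is trivial, since the right-hand side is nonnegative and any countable $\mathfrak{D}_0 \subset \mathfrak{D}$ works; so I assume $d > 0$ and fix an arbitrary $r$ with $0 < r < d/2$.

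First I would build an $r$-separated sequence inside $\mathfrak{D}$. Pick $x_1 \in \mathfrak{D}$ arbitrarily, and having chosen $x_1,\dots,x_n$, ask whether $\mathfrak{D} \subseteq \bigcup_{i=1}^{n} \overline{B}(x_i, r)$. If this ever held, then $\mathfrak{D}$ would be covered by finitely many sets of diameter at most $2r < d$, forcing $\mu(\mathfrak{D}) \le 2r < d$, a contradiction. Hence the alternative always occurs, and I may choose $x_{n+1} \in \mathfrak{D} \setminus \bigcup_{i=1}^{n}\overline{B}(x_i, r)$, obtaining an infinite sequence $(x_n)$ with $\|x_n - x_m\| > r$ for all $n \neq m$.

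Next I would bound the measure of this separated set from below. Let $\mathfrak{D}_0^{(r)} := \{x_n : n \ge 1\}$. If $\mu(\mathfrak{D}_0^{(r)}) < r$, there would be a finite cover of $\mathfrak{D}_0^{(r)}$ by sets of diameter strictly less than $r$; each such set contains at most one $x_n$, since two distinct points of $\mathfrak{D}_0^{(r)}$ lie within distance $< r$, violating the separation. Then $\mathfrak{D}_0^{(r)}$ would be finite, which is impossible. Thus $\mu(\mathfrak{D}_0^{(r)}) \ge r$ for every admissible $r < d/2$.

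Finally, to pass from a statement holding for each $r$ to a single countable set, I would take $r_k = d/2 - 1/k$ for all $k$ large enough that $r_k > 0$ and set $\mathfrak{D}_0 := \bigcup_{k} \mathfrak{D}_0^{(r_k)}$, which remains a countable subset of $\mathfrak{D}$. By monotonicity of $\mu$ (item (4) of Lemma \ref{lem1}), $\mu(\mathfrak{D}_0) \ge \mu(\mathfrak{D}_0^{(r_k)}) \ge r_k$ for all $k$, whence $\mu(\mathfrak{D}_0) \ge d/2$ and therefore $\mu(\mathfrak{D}) \le 2\mu(\mathfrak{D}_0)$. The main obstacle I anticipate is conceptual rather than computational: a single value $r < d/2$ yields only $2\mu(\mathfrak{D}_0) \ge 2r < d$, so the union step taking $r_k \uparrow d/2$ is essential to recover the full constant; one must also verify that the greedy process genuinely cannot terminate, which is exactly the role played by the strict inequality $2r < d$ against the infimum defining $\mu$.
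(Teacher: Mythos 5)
Your proof is correct, and there is in fact nothing in the paper to compare it against: the lemma is stated with a citation to \cite{principal,med} and no argument is given, so your proposal supplies a proof where the paper has none. Your route is the standard one relating the Kuratowski measure to the separation measure of noncompactness, and the three delicate points all check out. First, the greedy process cannot terminate: if $\mathfrak{D}\subseteq\bigcup_{i=1}^{n}\overline{B}(x_i,r)$, then the sets $\mathfrak{D}\cap\overline{B}(x_i,r)$ form a finite cover of $\mathfrak{D}$ by sets of diameter at most $2r<d$, forcing $\mu(\mathfrak{D})\le 2r<d$ by Definition \ref{def2}, a contradiction. Second, your lower bound $\mu(\mathfrak{D}_0^{(r)})\ge r$ is sound: a cover witnessing $\mu(\mathfrak{D}_0^{(r)})<r$ would consist of sets of diameter strictly less than $r$, each containing at most one point of an $r$-separated set, so the infinite set $\mathfrak{D}_0^{(r)}$ would admit no such finite cover. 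Third, you correctly identify that a single $r<d/2$ gives only $2\mu(\mathfrak{D}_0^{(r)})\ge 2r<d$, so the union $\mathfrak{D}_0=\bigcup_k\mathfrak{D}_0^{(r_k)}$ with $r_k\uparrow d/2$, together with monotonicity (Lemma \ref{lem1}, item (4)), is genuinely needed to recover the constant $2$; this union is still a countable subset of $\mathfrak{D}$, so the conclusion $\mu(\mathfrak{D})\le 2\mu(\mathfrak{D}_0)$ follows. What this buys over the paper's treatment is a self-contained, elementary proof using only the definition of $\mu$ and its monotonicity, and it makes transparent where the factor $2$ comes from: a ball of radius $r$ has diameter $2r$.
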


\begin{lemma}{\rm\cite{principal,med}}\label{lem4} Let $\Omega $ be a Banach space, and let $\mathfrak{D}=\left\{ u_{n}\right\} \subset \mathbf{PC}_{1-\gamma }\left( \left[ b_{1},b_{2}\right] ,\Omega \right) $ be a bounded and countable set for constants $-\infty <b_{1}<b_{2}<\infty $. Then $\mu \left( \mathfrak{D}\left( t\right) \right) $ is Lebesgue integral on $\left[ b_{1},b_{2}\right] $ and
\begin{equation*}
\mu \left( \left\{ \int_{b_{1}}^{b_{2}}u_{n}\left( t\right) dt:n\in \mathbb{N} \right\} \right) \leq 2\int_{b_{1}}^{b_{2}}\mu \left( \mathfrak{D}\left( t\right) \right) dt.
\end{equation*}
\end{lemma}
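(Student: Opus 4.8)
The plan is to prove the two assertions separately: first that $\phi(t):=\mu(\mathfrak{D}(t))$ is Lebesgue integrable on $[b_1,b_2]$, and then the integral estimate. For integrability I would exploit that $\mathfrak{D}=\{u_n\}$ is countable, so all of the values $u_n(t)$ lie in a fixed closed separable subspace $\Omega_0=\overline{\mathrm{span}}\{u_n(t):n\in\mathbb{N},\ t\in[b_1,b_2]\}$. The scalar functions $t\mapsto\|u_n(t)-u_m(t)\|$ are measurable (indeed piecewise continuous, since the $u_n\in\mathbf{PC}_{1-\gamma}$), and on a separable space the Kuratowski measure of a countable set is recovered from this pairwise distance structure; hence $t\mapsto\phi(t)$ is measurable. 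Since $\mathfrak{D}$ is bounded in $\mathbf{PC}_{1-\gamma}$, the section $\mathfrak{D}(t)$ is bounded in $\Omega$ uniformly in $t$, so $\phi$ is a bounded measurable function on the finite interval $[b_1,b_2]$ and is therefore integrable.

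For the estimate I would split $[b_1,b_2]$ into $N$ subintervals $J_i=[\tau_{i-1},\tau_i]$ of length $h_i$ and apply the mean value theorem for the vector-valued (Bochner) integral on each piece, which gives
\[
\frac{1}{h_i}\int_{J_i}u_n(t)\,dt\in\overline{\mathrm{conv}}\,\mathfrak{D}(J_i),\qquad \mathfrak{D}(J_i):=\{u_n(t):n\in\mathbb{N},\ t\in J_i\}.
\]
Because $\int_{b_1}^{b_2}u_n=\sum_{i=1}^{N}\int_{J_i}u_n$ for every $n$, the set of full integrals is contained in the (Minkowski) sum $\sum_{i=1}^{N} h_i\,\overline{\mathrm{conv}}\,\mathfrak{D}(J_i)$. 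I would then invoke Lemma~\ref{lem1}: finite subadditivity (property~(6)), positive homogeneity (property~(3)), and invariance of $\mu$ under the closed convex hull (property~(2)), to obtain
\[
\mu\!\left(\left\{\int_{b_1}^{b_2}u_n(t)\,dt:n\in\mathbb{N}\right\}\right)\le\sum_{i=1}^{N} h_i\,\mu\big(\mathfrak{D}(J_i)\big).
\]

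The remaining step is to let the mesh $\max_i h_i\to0$ and identify the right-hand side with (twice) $\int_{b_1}^{b_2}\phi$. The delicate point is that $\mathfrak{D}(J_i)=\bigcup_{t\in J_i}\mathfrak{D}(t)$ is an \emph{uncountable} union whose measure is not controlled pointwise by the values $\phi(t)$, and this is exactly where the constant $2$ is needed. The cleanest route is to run the two displays above with the Hausdorff measure $\chi$ in place of $\mu$ (legitimate, since $\chi$ obeys the same axioms of Lemma~\ref{lem1}), prove the slab/Riemann-sum convergence $\sum_i h_i\,\chi(\mathfrak{D}(J_i))\to\int_{b_1}^{b_2}\chi(\mathfrak{D}(t))\,dt$ with constant $1$ — using the measurability of $\chi(\mathfrak{D}(\cdot))$ and a covering of each slab by finitely many balls of radius close to its supremum — and then convert back through the relation $\chi\le\mu\le 2\chi$, which gives the factor $2$. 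One reaches the same constant by instead applying Lemma~\ref{lem3} to a countable dense selection inside each slab.

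The hard part will be this last step: proving the measurability of $t\mapsto\mu(\mathfrak{D}(t))$ rigorously and, above all, controlling the slab sets $\mathfrak{D}(J_i)$ uniformly in $n$ so that the sums $\sum_i h_i\,\mu(\mathfrak{D}(J_i))$ actually converge to the integral. The convergence of these sums is not uniform in $n$ in general, and it is precisely the passage between the pointwise measure $\mu(\mathfrak{D}(t))$ and the measure of the union over a whole subinterval that forces the constant $2$ rather than $1$; everything else reduces to routine applications of the axioms of $\mu$ recorded in Lemma~\ref{lem1}.
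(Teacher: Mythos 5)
Your attempt cannot be compared with an internal argument, because the paper never proves Lemma~\ref{lem4}: it is quoted from \cite{principal,med} (it is the classical Heinz-type lemma on measures of noncompactness and Bochner integrals), so your proposal has to stand on its own. Its skeleton --- measurability of $\phi(t)=\mu(\mathfrak{D}(t))$, the mean value theorem for Bochner integrals, the Minkowski-sum estimate via Lemma~\ref{lem1}(2),(3),(6), and finally a conversion between the Kuratowski and Hausdorff measures --- is reasonable, and the last idea is indeed where the factor $2$ comes from in the standard proof. But the step you yourself flag as ``the hard part'' is not merely hard: it is false. The slab sums $\sum_i h_i\,\mu\bigl(\mathfrak{D}(J_i)\bigr)$ do \emph{not} converge to $\int_{b_1}^{b_2}\mu(\mathfrak{D}(t))\,dt$ as the mesh tends to $0$, with any constant, unless the family $\{u_n\}$ is equicontinuous --- which is precisely the extra hypothesis of Lemma~\ref{lem5} and is not available here. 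Concretely, take $\Omega=\ell^2$, $[b_1,b_2]=[0,1]$, and $u_n(t)=\theta_n(t)e_n$, where $(e_n)$ is the standard orthonormal basis and $\theta_n$ is continuous, $0\le\theta_n\le1$, equal to $1$ at every grid point $j2^{-n}$ and supported in intervals of width $4^{-n}$ around these points. The supports of $\theta_n$ have total measure about $2^{-n}$, so by Borel--Cantelli almost every $t$ lies in only finitely many of them; hence $\mathfrak{D}(t)$ is finite up to the zero vector, $\mu(\mathfrak{D}(t))=0$ a.e., and $\int_0^1\mu(\mathfrak{D}(t))\,dt=0$. Yet any subinterval $J_i$ contains a grid point of every sufficiently fine generation, so $\mathfrak{D}(J_i)\supset\{e_n:2^{-n}<|J_i|\}$ and $\mu(\mathfrak{D}(J_i))\ge\sqrt{2}$; thus every slab sum is at least $\sqrt{2}$, however fine the partition. (The lemma itself is not violated: $\int_0^1u_n(t)\,dt=(\int\theta_n)e_n\to0$ in norm, so the left-hand side is $0$.) So the Riemann-sum route cannot be completed, and the constant $2$ is not what rescues it.

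Where the $2$ actually comes from, and how the genuine proof runs: pass to the closed separable subspace $\Omega_0$ generated by all values $u_n(t)$ (as you do), fix a dense sequence $\{y_j\}\subset\Omega_0$, and let $\chi_0$ be the Hausdorff measure of noncompactness with centers in $\Omega_0$. Then $\chi_0(\mathfrak{D}(t))=\lim_{k}\sup_n\min_{j\le k}\|u_n(t)-y_j\|$, which gives measurability at once; fixing $k$ so that $\int_{b_1}^{b_2}\sup_n\min_{j\le k}\|u_n(t)-y_j\|\,dt\le\int_{b_1}^{b_2}\chi_0(\mathfrak{D}(t))\,dt+\varepsilon$ and replacing each $u_n$ by its nearest-point selection $v_n(t)\in\{y_1,\dots,y_k\}$, one sees that all integrals $\int_{b_1}^{b_2}v_n(t)\,dt$ lie in the compact set $\{\sum_{j\le k}c_jy_j:\ c_j\ge0,\ \sum_jc_j=b_2-b_1\}$, while $\|\int u_n-\int v_n\|\le\int\chi_0(\mathfrak{D}(t))\,dt+\varepsilon$ uniformly in $n$. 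This yields the inequality with constant $1$ for $\chi_0$, and then $\chi_0\le\mu\le2\chi_0$ produces exactly the stated estimate; no slab decomposition is involved. Two further corrections to your write-up: in the weighted space $\mathbf{PC}_{1-\gamma}$, boundedness of $\mathfrak{D}$ only gives $\|u_n(t)\|\le C(t-t_k)^{\gamma-1}$, so $\phi$ is integrable but in general \emph{unbounded} (and the slab sets containing the points $t_k$ need not even be bounded), contrary to your integrability argument; and the closing appeal to Lemma~\ref{lem3} is misplaced --- that lemma passes from a bounded set to a countable subset, which is irrelevant here since $\mathfrak{D}$ is already countable, and it says nothing about comparing the measure of a slab union with the pointwise measures $\mu(\mathfrak{D}(t))$.
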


\begin{lemma}{\rm\cite{principal,med}}\label{lem5} Let $\Omega $ be a Banach space, and let $\mathfrak{D}\subset C_{1-\gamma }\left( \left[ b_{1},b_{2}\right] ,\Omega \right) $ be bounded and equicontinuous. Then $\mu \left( \mathfrak{D}\left( t\right) \right) $ is continuous on $\left[ b_{1},b_{2}\right]$, and 
\begin{equation*}
\mu _{C_{1-\gamma }}\left( \mathfrak{D}\right) =\underset{t\in \left[ b_{1},b_{2}\right]}{\max }\mu \left( \mathfrak{D}\left( t\right) \right) .
\end{equation*}
\end{lemma}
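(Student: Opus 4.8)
The plan is to prove the two assertions in turn: first the continuity of the function $t \mapsto \mu(\mathfrak{D}(t))$ on $[b_1,b_2]$, and then the identity for $\mu_{C_{1-\gamma}}(\mathfrak{D})$. Throughout I would work with the weighted representatives $\widetilde u(t) := t^{1-\gamma}u(t)$, so that all pointwise value sets and the norm are measured consistently; indeed $\mathfrak{D}$ being bounded and equicontinuous in $C_{1-\gamma}$ means precisely that $\{\widetilde u : u\in\mathfrak{D}\}$ is uniformly bounded and equicontinuous in the ordinary space $C([b_1,b_2],\Omega)$, and the map $u\mapsto\widetilde u$ is an isometric isomorphism onto $C([b_1,b_2],\Omega)$ (the weight being continuous and bounded away from $0$ on $[b_1,b_2]$), hence preserves diameters and the Kuratowski measure. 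The only tools I need are the properties of $\mu$ collected in Lemma~\ref{lem1} --- monotonicity (item (4)), subadditivity (item (6)), the Lipschitz estimate (item (7)) --- together with the elementary bound $\mu(S)\le\mathrm{diam}(S)$.

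For the continuity, set $\varphi(t):=\mu(\mathfrak{D}(t))$. Fix $t_0$ and $\varepsilon>0$. Equicontinuity furnishes $\delta>0$ such that $|t-t_0|<\delta$ implies $\|\widetilde u(t)-\widetilde u(t_0)\|<\varepsilon$ for every $u\in\mathfrak{D}$. Then the value set at $t$ is contained in the value set at $t_0$ enlarged by the ball $\overline B_\varepsilon$, so monotonicity and subadditivity together with $\mu(\overline B_\varepsilon)\le 2\varepsilon$ give $\varphi(t)\le\varphi(t_0)+2\varepsilon$; exchanging the roles of $t$ and $t_0$ yields $|\varphi(t)-\varphi(t_0)|\le 2\varepsilon$. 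Hence $\varphi$ is (uniformly) continuous on $[b_1,b_2]$, and by compactness the maximum $M:=\max_{t}\varphi(t)$ is attained, so the right-hand side of the claimed identity is well defined.

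The inequality $\max_t\mu(\mathfrak{D}(t))\le\mu_{C_{1-\gamma}}(\mathfrak{D})$ is immediate from the projection property recorded just before this lemma (each evaluation map is $1$-Lipschitz, so item (7) of Lemma~\ref{lem1} applies). For the reverse inequality I would argue by a covering construction. Given $\varepsilon>0$, uniform equicontinuity on the compact interval yields a partition $b_1=\tau_0<\tau_1<\cdots<\tau_p=b_2$ with $\|\widetilde u(t)-\widetilde u(\tau_i)\|<\varepsilon$ whenever $t\in[\tau_{i-1},\tau_i]$ and $u\in\mathfrak{D}$. Since $\mu(\mathfrak{D}(\tau_i))\le M$ for each $i$, each pointwise set can be covered by finitely many pieces of diameter $\le M+\varepsilon$; refining over all indices $i=0,\dots,p$ simultaneously produces finitely many subsets $\mathfrak{D}_1,\dots,\mathfrak{D}_q$ covering $\mathfrak{D}$, where $u,v$ lie in a common $\mathfrak{D}_l$ only if $\|\widetilde u(\tau_i)-\widetilde v(\tau_i)\|\le M+\varepsilon$ for every $i$. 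For such $u,v$ and arbitrary $t\in[\tau_{i-1},\tau_i]$, the triangle inequality $\|\widetilde u(t)-\widetilde v(t)\|\le\|\widetilde u(t)-\widetilde u(\tau_i)\|+\|\widetilde u(\tau_i)-\widetilde v(\tau_i)\|+\|\widetilde v(\tau_i)-\widetilde v(t)\|$ bounds the $C_{1-\gamma}$-distance by $M+3\varepsilon$, so $\mathrm{diam}_{C_{1-\gamma}}(\mathfrak{D}_l)\le M+3\varepsilon$ and thus $\mu_{C_{1-\gamma}}(\mathfrak{D})\le M+3\varepsilon$. Letting $\varepsilon\to0$ closes the argument.

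The main obstacle is this reverse inequality, and within it the passage from a cover at the finitely many sampling points $\tau_i$ to a genuine sup-norm cover of $\mathfrak{D}$: equicontinuity is exactly what controls $\widetilde u(t)$ for $t$ strictly between sampling nodes by its value at the nearest $\tau_i$, and the interplay of this uniform estimate with the finite covering at the nodes is the delicate point. The continuity part and the easy inequality are routine consequences of the $\mu$-calculus of Lemma~\ref{lem1}. I would also keep an eye on the weight $t^{1-\gamma}$ near the left endpoint, so that the isometric reduction to the standard space $C([b_1,b_2],\Omega)$ is legitimate there (which it is provided $b_1>0$, the boundary case being absorbed by the membership condition $t^{1-\gamma}u(t)\in C$ built into the definition of $C_{1-\gamma}$).
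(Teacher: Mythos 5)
There is nothing in the paper to compare your argument against: Lemma \ref{lem5} is stated with citations to the literature and is never proved in the text (it is the classical Ambrosetti-type lemma for the Kuratowski measure, imported as a known tool). So the only question is whether your blind proof is sound, and it is. Your three ingredients --- (i) continuity of $t\mapsto\mu(\mathfrak{D}(t))$ from equicontinuity together with monotonicity and subadditivity of $\mu$ and the bound $\mu(\overline{B}_\varepsilon)\le 2\varepsilon$; (ii) the inequality $\max_t\mu(\mathfrak{D}(t))\le\mu_{C_{1-\gamma}}(\mathfrak{D})$ from the $1$-Lipschitz evaluation maps via Lemma \ref{lem1}(7), which is exactly the property the paper records just before Definition \ref{def3}; (iii) the reverse inequality by sampling at finitely many nodes $\tau_i$, covering each value set at the nodes by sets of diameter $\le M+\varepsilon$, taking the common refinement over all nodes, and using equicontinuity to propagate the bound to all intermediate $t$ --- constitute precisely the standard proof of Ambrosetti's lemma. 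Step (iii), the only delicate one, is executed correctly: the refinement is finite, the triangle inequality gives $\mathrm{diam}_{C_{1-\gamma}}(\mathfrak{D}_l)\le M+3\varepsilon$ for each piece, hence $\mu_{C_{1-\gamma}}(\mathfrak{D})\le M+3\varepsilon$, and $\varepsilon\to 0$ closes the gap.

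One point you handled that deserves emphasis, because the paper glosses over it: as literally written, with $\mathfrak{D}(t)=\left\{ u(t):u\in\mathfrak{D}\right\}\subset\Omega$ unweighted, the identity of Lemma \ref{lem5} cannot hold --- the isometry $u\mapsto t^{1-\gamma}u(t)$ converts $\mu_{C_{1-\gamma}}(\mathfrak{D})$ into $\max_t\mu\bigl(t^{1-\gamma}\mathfrak{D}(t)\bigr)=\max_t t^{1-\gamma}\mu(\mathfrak{D}(t))$, which differs from $\max_t\mu(\mathfrak{D}(t))$ by the weight, and moreover $\mathfrak{D}(0)$ need not even be defined when $b_1=0$, since elements of $C_{1-\gamma}$ may blow up at the origin. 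Your convention of measuring all value sets through the weighted representatives $\widetilde{u}(t)=t^{1-\gamma}u(t)$ is the reading under which the statement is true, and it is also the reading under which the lemma is actually invoked in Step IV of the paper's main theorem; your closing remark about the left endpoint makes this explicit rather than leaving it as an unexamined convention.
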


\begin{definition}{\rm\cite{est1,exist2,exist}}\label{def4} Suppose that $\mathcal{A}$ is a closed, densely defined linear operator on $\Omega $. A family $\left\{ \mathbb{P}_{\alpha }\left( t\right) /t\geq 0\right\} \subset \mathbf{B}\left( \Omega \right) $ is called an $\alpha$-times resolvent family generator by $\mathcal{A}$ if the following conditions are satisfied:
\begin{enumerate}
\item $\mathbb{P}_{\alpha }\left( t\right) $ is strongly continuous on $\mathbb{R} _{+}$ and $\mathbb{P}_{\alpha }\left( 0\right) =I$;

\item $\mathbb{P}_{\alpha }\left( t\right) \mathfrak{D}\left( \mathcal{A}\right) \subset \mathfrak{D}\left( \mathcal{A}\right) $ and $\mathcal{A}\mathbb{P}_{\alpha }\left( t\right) x=\mathbb{P}_{\alpha }\mathcal{A}x$ for all $x\in \mathfrak{D}\left( \mathcal{A}\right) ,$ $t\geq 0$;

\item For all $x\in \mathfrak{D}\left( \mathcal{A}\right) $ and $t\geq 0$, $\mathbb{P}_{\alpha }\left( t\right) x=x+I_{0+}^{\alpha }\mathbb{P}_{\alpha }\left( t\right) \mathcal{A}x$.
\end{enumerate}
\end{definition}

\begin{lemma}{\rm\cite{est1,exist2,exist}}\label{lem6} The fractional nonlinear differential equation {\rm Eq.(\ref{eq1})} is equivalent to the integral equation
\begin{equation}\label{eq5}
u\left( t\right) =\left\{ 
\begin{array}{cl}
\dfrac{t^{\gamma -1}}{\Gamma \left( \gamma \right) }u_{0}+\dfrac{1}{\Gamma
\left( \alpha \right) }\displaystyle\int_{0}^{t}\left( t-s\right) ^{\alpha -1}\left(
f\left( s,u\left( s\right) \right) -\mathcal{A}u\left( s\right) \right) ds, & t\in 
\left[ 0,t_{1}\right]  \\ 
\xi _{i}\left( t,u\left( t\right) \right) , & t\in \left( t_{i},s_{i}\right]
,\text{ }i=1,...,m \\ 
\zeta_{k}\left( t,u\left( t\right) \right) +\dfrac{1}{\Gamma \left( \alpha
\right) }\displaystyle\int_{0}^{t}\left( t-s\right) ^{\alpha -1}\left( f\left( s,u\left(
s\right) \right) -\mathcal{A}u\left( s\right) \right) ds & t\in \left( s_{i},t_{i+1}
\right] ,\text{ }i=1,...,m%
\end{array}%
\right. .
\end{equation}
\end{lemma}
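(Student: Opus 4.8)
The plan is to establish the equivalence in both directions and on each of the three types of subinterval dictated by the non-instantaneous impulse structure: the initial interval $[0,t_1]$, the impulse windows $(t_i,s_i]$, and the post-impulse intervals $(s_i,t_{i+1}]$. The single analytic tool that drives everything is the composition rule relating the Riemann--Liouville integral $I_{0+}^{\alpha}$ to the Hilfer derivative $^{H}\mathbb{D}_{0+}^{\alpha,\beta}$; once this is in hand, each case reduces to substituting the relevant side condition and collecting terms.

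First I would record the key identity. Taking $n=1$ (valid since $0<\alpha\le 1$) and using the definitions $\beta(1-\alpha)=\gamma-\alpha$ and $(1-\beta)(1-\alpha)=1-\gamma$, the Hilfer derivative factors as
\[
^{H}\mathbb{D}_{0+}^{\alpha,\beta}u = I_{0+}^{\gamma-\alpha}\,\frac{d}{dt}\,I_{0+}^{1-\gamma}u = I_{0+}^{\gamma-\alpha}\,D_{0+}^{\gamma}u,
\]
where $D_{0+}^{\gamma}$ is the Riemann--Liouville derivative of order $\gamma\in(0,1]$. Applying $I_{0+}^{\alpha}$ and using the semigroup property $I_{0+}^{\alpha}I_{0+}^{\gamma-\alpha}=I_{0+}^{\gamma}$ together with the inversion formula $I_{0+}^{\gamma}D_{0+}^{\gamma}u(t)=u(t)-\frac{t^{\gamma-1}}{\Gamma(\gamma)}\big(I_{0+}^{1-\gamma}u\big)(0^{+})$ yields
\[
I_{0+}^{\alpha}\,{}^{H}\mathbb{D}_{0+}^{\alpha,\beta}u(t)=u(t)-\frac{t^{\gamma-1}}{\Gamma(\gamma)}\big(I_{0+}^{1-\gamma}u\big)(0^{+}).
\]
On $(0,t_1]$ the first line of Eq.(\ref{eq1}) reads $^{H}\mathbb{D}_{0+}^{\alpha,\beta}u=f(t,u)-\mathcal{A}u$; applying $I_{0+}^{\alpha}$ to both sides, moving the boundary term across, and inserting the initial condition $I_{0+}^{1-\gamma}u(0)=u_0$ gives exactly the first branch of Eq.(\ref{eq5}).

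The impulse windows $(t_i,s_i]$ are immediate: there Eq.(\ref{eq1}) prescribes $u(t)=\zeta_i(t,u(t))$, which is the second branch verbatim. For the post-impulse intervals $(s_i,t_{i+1}]$ the differential equation is active again, so I would integrate as above; the homogeneous term is now fixed not by $u_0$ but by the state transmitted through the preceding impulse window, i.e.\ by the continuity matching $u(s_i^{+})=\zeta_i(s_i,u(s_i))$, which after carrying the impulse datum through produces the $\zeta_k(t,u(t))$ term while leaving the same Riemann--Liouville integral of $f-\mathcal{A}u$, giving the third branch. For the converse I would apply $^{H}\mathbb{D}_{0+}^{\alpha,\beta}$ to each branch of Eq.(\ref{eq5}), using $^{H}\mathbb{D}_{0+}^{\alpha,\beta}t^{\gamma-1}=0$ and $^{H}\mathbb{D}_{0+}^{\alpha,\beta}I_{0+}^{\alpha}g=g$ to recover the differential equation, and recover the initial condition by applying $I_{0+}^{1-\gamma}$ to the first branch and letting $t\to 0^{+}$, noting $I_{0+}^{1-\gamma}\big(t^{\gamma-1}/\Gamma(\gamma)\big)=1$ while $I_{0+}^{1-\gamma+\alpha}(f-\mathcal{A}u)(0^{+})=0$.

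The main obstacle is making all of this rigorous in the weighted space $C_{1-\gamma}(J,\Omega)$ rather than for smooth functions: one must justify the convergence of the singular integrals near $0$, the legitimacy of the factorization $^{H}\mathbb{D}_{0+}^{\alpha,\beta}=I_{0+}^{\gamma-\alpha}\,\frac{d}{dt}\,I_{0+}^{1-\gamma}$ and of interchanging $d/dt$ with the integral, and---most delicately---the bookkeeping of base points across the impulse structure, since the Riemann--Liouville operators appearing in Eq.(\ref{eq5}) are anchored at $0$ whereas the dynamics effectively restart at each $s_i$. Reconciling the lower limit $0$ in the third branch with the impulse-determined homogeneous term $\zeta_k(t,u(t))$ is the step that requires the most care, and I would treat it by propagating the value $u(s_i^{+})$ explicitly through the matching at $s_i$ rather than by a naive application of $I_{0+}^{\alpha}$ over the whole interval $(0,t_{i+1}]$.
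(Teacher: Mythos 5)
The paper itself offers no proof of this lemma---it is imported verbatim from the cited references---so there is no internal argument to compare against; your proposal must be judged on its own merits, and its analytic core is the standard route and is correct. The factorization $^{H}\mathbb{D}_{0+}^{\alpha ,\beta }=I_{0+}^{\gamma -\alpha }\frac{d}{dt}I_{0+}^{1-\gamma }$, the identity $I_{0+}^{\alpha }\,{}^{H}\mathbb{D}_{0+}^{\alpha ,\beta }u\left( t\right) =u\left( t\right) -\frac{t^{\gamma -1}}{\Gamma \left( \gamma \right) }\left( I_{0+}^{1-\gamma }u\right) \left( 0^{+}\right)$, the derivation of the first branch on $\left[ 0,t_{1}\right]$ (legitimate there, since $s_{0}=0$ means the differential equation holds on all of $\left( 0,t\right)$), the trivial second branch, and the converse identities $^{H}\mathbb{D}_{0+}^{\alpha ,\beta }t^{\gamma -1}=0$ and $^{H}\mathbb{D}_{0+}^{\alpha ,\beta }I_{0+}^{\alpha }g=g$ are all sound.

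The genuine gap is the third branch, and your proposed fix does not close it. For $t\in \left( s_{i},t_{i+1}\right]$ with $i\geq 1$, the identity above gives $u\left( t\right) =\frac{t^{\gamma -1}}{\Gamma \left( \gamma \right) }u_{0}+\frac{1}{\Gamma \left( \alpha \right) }\int_{0}^{t}\left( t-s\right) ^{\alpha -1}\,{}^{H}\mathbb{D}_{0+}^{\alpha ,\beta }u\left( s\right) ds$, but you may replace the integrand by $f\left( s,u\left( s\right) \right) -\mathcal{A}u\left( s\right)$ only on the subintervals $\left( s_{j},t_{j+1}\right]$ where Eq.(\ref{eq1}) actually imposes the differential equation; on the impulse windows $\left( t_{j},s_{j}\right]$ the dynamics are replaced by the constraint $u=\zeta _{j}\left( \cdot ,u\left( \cdot \right) \right)$, whose Hilfer derivative is not $f-\mathcal{A}u$. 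Since the memory integral $\int_{0}^{t}$ in the stated formula necessarily crosses those windows, matching $u\left( s_{i}^{+}\right) =\zeta _{i}\left( s_{i},u\left( s_{i}\right) \right)$ cannot rescue the computation: the matching fixes only the homogeneous coefficient, while the unknown contribution of the impulse windows to the nonlocal integral remains, so the argument as sketched would produce either an integral restarted at $s_{i}$ or extra correction terms, not the displayed $\zeta _{k}\left( t,u\left( t\right) \right) +\frac{1}{\Gamma \left( \alpha \right) }\int_{0}^{t}\left( t-s\right) ^{\alpha -1}\left( f-\mathcal{A}u\right) ds$. (Note also that the displayed branch evaluates $\zeta _{k}$ at $\left( t,u\left( t\right) \right)$ for $t\in \left( s_{i},t_{i+1}\right]$, outside its declared domain $\left( t_{k},s_{k}\right] \times \Omega$, and is inconsistent with the paper's own mild-solution Definition, which uses $\zeta _{k}\left( s_{k},u\left( s_{k}\right) \right)$ and $\int_{s_{i}}^{t}$.) A complete proof must either impose a consistency condition identifying these discrepant quantities or restate the lemma with the fractional operators re-anchored at each $s_{i}$; asserting that the matching ``produces'' the stated branch, as your sketch does, is precisely the step that fails.
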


The following is the definition of the Wright function, fundamental in mild solution of the Eq.(\ref{eq1}). Then, the Wright function $\mathbf{M}_{\alpha }\left( \mathbf{Q}\right) $ is defined by
\begin{equation*}
\mathbf{M}_{\alpha }\left( \mathbf{Q}\right) =\overset{\infty }{\underset{n=1}{\sum }}\frac{\left( -\mathbf{Q}\right) ^{n-1}}{\left( n-1\right) !\Gamma \left( 1-\alpha n\right) } ,\text{ }0<\alpha <1,\text{ }\mathbf{Q}\in \mathbb{C}
\end{equation*}
satisfying the equation
\begin{equation*}
\int_{0}^{\infty }\theta ^{\overline{\delta }}\mathbf{M}_{\alpha }\left( \theta \right) d\theta =\frac{\Gamma \left( 1+\overline{\delta }\right) }{\Gamma \left( 1+\alpha \overline{\delta }\right) },\text{ for }\theta \geq 0.
\end{equation*}

\begin{definition}{\rm\cite{est1,exist2,exist}} A function $u \in \mathbf{PC}_{1-\gamma }\left( J,\Omega \right) $ is called a mild solution of {\rm Eq.(\ref{eq1})}, if the integral {\rm Eq.(\ref{eq5})} holds, we have
\begin{equation*}
u\left( t\right) =\left\{ 
\begin{array}{rl}
\mathbb{P}_{\alpha ,\beta }\left( t\right) u_{0}+\displaystyle\int_{0}^{t}\mathbf{K}_{\alpha }\left(t-s\right) f\left( s,u\left( s\right) \right) ds, & t\in \left[ 0,t_{1}\right]  \\ 
\xi _{i}\left( t,u\left( t\right) \right) , & t\in \left( t_{i},s_{i}\right]
,\,\text{ }i=1,...,m \\ 
\mathbb{P}_{\alpha ,\beta }\left( t\right) \zeta_{k}\left( s_{k},u\left( s_{k}\right) \right) +\displaystyle\int_{s_{i}}^{t}\mathbf{K}_{\alpha }\left( t-s\right) f\left( s,u\left( s\right) \right) ds & t\in \left( s_{i},t_{i+1}\right] ,\text{ }i=1,...,m%
\end{array}
\right. 
\end{equation*}
where $\mathbf{K}_{\alpha }\left( t\right) =t^{\gamma -1}\mathbf{G}_{\alpha }\left( t\right)$, $\mathbf{G}_{\alpha }\left( t\right) =\displaystyle\int_{0}^{\infty }\alpha \theta \mathbf{M}_{\alpha }\left( \theta \right) \mathbb{P}_{\alpha ,\beta }\left( t^{\alpha }\theta \right) d\theta $ and $\mathbb{P}_{\alpha ,\beta }\left( t\right) =I_{\theta }^{\beta \left( 1-\alpha \right) }\mathbf{K}_{\alpha }\left( t\right)$.
\end{definition}

%%%%%%%%%%%%%%%%%%%%%%%%%%%%%%%%%%%%%%%%%%%%%%%%%%%%%%%%%%%%%%%%%%%%%%%%%%%%%%%%%%%%%%%%%%%%%%%%%%%%%%%%%%%%%%%%%%%%%%%
%%%%%%%%%%%%%%%%%%%%%%%%%%%%%%%%%%%%%%%%%%%%%%%%%%%%%%%%%%%%%%%%%%%%%%%%%%%%%%%%%%%%%%%%%%%%%%%%%%%%%%%%%%%%%%%%%%%%%%%
%%%%%%%%%%%%%%%%%%%%%%%%%%%%%%%%%%%%%%%%%%%%%%%%%%%%%%%%%%%%%%%%%%%%%%%%%%%%%%%%%%%%%%%%%%%%%%%%%%%%%%%%%%%%%%%%%%%%%%%
\section{Existence continuous mild solutions}

In this section, we investigate the main result of this paper, the existence of continuous mild solutions for Eq.(\ref{eq1}) using the idea equicontinuity of $C_{0}$-semigroup $(\mathbb{P}_{\alpha}(t))_{t\geq 0}$ and the Lebesgue dominated convergence theorem. However, to achieve such a result, we assume certain conditions:

(A1) The nonlinear function $f:J\times \Omega \rightarrow \Omega $ is continuous, for some $r>0$, there exist a constant $\rho >0$, Lebesgue integrable function, $\varphi :J\rightarrow \left[ 0,\infty \right) $ and a
nondecreasing continuous function $\psi :\left[ 0,\infty \right) \rightarrow \left[ 0,\infty \right) $, such that for all $t\in J$ and $u\in \Omega $ satisfying $\left\Vert u\right\Vert _{C_{1-\gamma }}\leq r$,
\begin{equation*}
\left\Vert f\left( t,u\right) \right\Vert _{C_{1-\gamma }}\leq \varphi \left( t\right) \psi \left( \left\Vert u\right\Vert _{C_{1-\gamma }}\right) \text{ and }\underset{r\rightarrow \infty }{\lim }\inf \frac{\psi \left(
r\right) }{r}=\rho <\infty ;
\end{equation*}

(A2) The impulsive function $\zeta_{k}:\left[ t_{k},s_{k}\right] \times \Omega \rightarrow \Omega $ is continuous and there exists a constant $\mathbf{K}_{\zeta_{k}}>0,$ $k=1,2,...,m$, such that for all $u,v\in \Omega $
\begin{equation*}
\left\Vert \zeta_{k}\left( t,u\right) -\zeta_{k}\left( t,v\right) \right\Vert _{C_{1-\gamma }}\leq \mathbf{K}_{\zeta_{k}}\left\Vert u-v\right\Vert _{C_{1-\gamma }},\text{ }\forall t\in \left( t_{k},s_{k}\right] .
\end{equation*}

(A3) There exists positive constants, $L_{k}$ $\left( k=0,1,...,m\right) $ such that for any countable set $\mathfrak{D}\subset \Omega$,
\begin{equation*}
\mu \left( f\left( t,\mathfrak{D}\right) \right) \leq \mathbf{L}_{k}\mu \left( \mathfrak{D}\right) ,\text{ }t\in \left( s_{k},t_{k+1}\right] ,\text{ }k=0,1,...,m.
\end{equation*}

For brevity of notation, we denote
\begin{equation*}
\mathbf{K}:=\underset{k=1,...,m}{\max }\mathbf{K}_{\zeta_{k}};
\end{equation*}
\begin{equation}\label{eq6}
\Lambda :=\underset{k=0,...,m}{\max }\left\Vert \varphi \right\Vert _{L\left[ s_{k},t_{k+1}\right] }; 
\end{equation}\label{eq7}
\begin{equation}
\mathbf{L}:=\underset{k=0,...,m}{\max }\mathbf{L}_{k}\left( t_{k+1}-s_{k}\right).
\end{equation}

\begin{teorema} Suppose that the semigroup $\left( \mathbb{P}_{\alpha }\left( t\right) \right) _{t\geq 0}$ generated by $-\mathcal{A}$ is equicontinuous, the function $\zeta_{k}\left( \cdot ,\theta \right) $ is bounded for $k=1,2,...,m.$ If the conditions {\rm (A1)-(A3)} are satisfied, then {\rm Eq.(\ref{eq1})} has at least one $\mathbf{PC}_{1-\gamma }-$mild solution $u\in \mathbf{PC}_{1-\gamma }\left( J,\Omega \right)$ provided that
\begin{equation}\label{eq8}
\mathbf{M}\max \left\{ \rho \Lambda +\mathbf{K},\mathbf{K}+4\mathbf{KL}\right\} <1.
\end{equation}
\end{teorema}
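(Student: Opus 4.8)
The plan is to reformulate Eq.(\ref{eq1}) as a fixed-point problem and invoke the $k$-set-contraction fixed point theorem, Lemma~\ref{lem2}. I would define the operator $\mathbf{Q}:\mathbf{PC}_{1-\gamma}(J,\Omega)\to\mathbf{PC}_{1-\gamma}(J,\Omega)$ piecewise by the right-hand side of the mild-solution formula, so that on $[0,t_1]$ one has $(\mathbf{Q}u)(t)=\mathbb{P}_{\alpha,\beta}(t)u_0+\int_0^t\mathbf{K}_\alpha(t-s)f(s,u(s))\,ds$, on each $(t_i,s_i]$ one has $(\mathbf{Q}u)(t)=\zeta_i(t,u(t))$, and on each $(s_i,t_{i+1}]$ one has $(\mathbf{Q}u)(t)=\mathbb{P}_{\alpha,\beta}(t)\zeta_i(s_i,u(s_i))+\int_{s_i}^t\mathbf{K}_\alpha(t-s)f(s,u(s))\,ds$. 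By Lemma~\ref{lem6} and the mild-solution definition, fixed points of $\mathbf{Q}$ are exactly the $\mathbf{PC}_{1-\gamma}$-mild solutions, so it suffices to produce a fixed point inside some $\Lambda_r$.

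First I would verify the self-mapping property $\mathbf{Q}(\Lambda_r)\subset\Lambda_r$ for $r$ large. Multiplying by the weight $t^{1-\gamma}$, bounding $\|\mathbb{P}_{\alpha,\beta}(t)\|\le\mathbf{M}$ via Eq.(\ref{eq2}), using the growth bound (A1) together with the definition of $\Lambda$ in Eq.(\ref{eq6}), and using that $\zeta_k(\cdot,\theta)$ is bounded together with the Lipschitz estimate (A2), one reaches an inequality of the shape $\|\mathbf{Q}u\|_{\mathbf{PC}_{1-\gamma}}\le\mathbf{M}(\psi(r)\Lambda+\mathbf{K}r)+\text{const}$. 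A standard contradiction argument then applies: were no finite $r$ to work, dividing by $r$ and passing to the limit with $\liminf_{r\to\infty}\psi(r)/r=\rho$ would force $1\le\mathbf{M}(\rho\Lambda+\mathbf{K})$, contradicting the first entry of the maximum in Eq.(\ref{eq8}). Continuity of $\mathbf{Q}$ on $\Lambda_r$ follows from continuity of $f$ and $\zeta_k$ and the Lebesgue dominated convergence theorem applied to the integral term, the integrable dominating function being supplied by (A1).

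The main obstacle is the $k$-set-contraction estimate: I must show $\mu_{\mathbf{PC}_{1-\gamma}}(\mathbf{Q}(\mathfrak{D}))\le k\,\mu_{\mathbf{PC}_{1-\gamma}}(\mathfrak{D})$ with $k<1$ for every bounded $\mathfrak{D}\subset\Lambda_r$. I would treat the three kinds of subintervals separately and combine them via property (5) of Lemma~\ref{lem1}. On the impulse intervals $(t_i,s_i]$ the Lipschitz bound (A2) and property (7) of Lemma~\ref{lem1} contribute the factor $\mathbf{K}$. On the intervals $(s_i,t_{i+1}]$, the term $\mathbb{P}_{\alpha,\beta}(t)\zeta_i(s_i,u(s_i))$ again contributes at most $\mathbf{K}$, while for the integral term I would first use equicontinuity of the semigroup to show $\mathbf{Q}(\mathfrak{D})$ is equicontinuous, so that Lemma~\ref{lem5} lets me compute $\mu_{C_{1-\gamma}}$ as a pointwise maximum of $\mu(\mathbf{Q}(\mathfrak{D})(t))$. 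Then Lemma~\ref{lem3} (factor $2$) reduces to a countable subset, Lemma~\ref{lem4} (another factor $2$) pushes the measure inside the integral, and (A3) with the definition of $\mathbf{L}$ in Eq.(\ref{eq7}) bounds $\mu(f(s,\mathfrak{D}(s)))\le\mathbf{L}_i\mu(\mathfrak{D}(s))$. The two factors of $2$ produce the coefficient $4$ and hence the term $4\mathbf{KL}$ of Eq.(\ref{eq8}); the delicate point is handling the singular weight $(t-s)^{\gamma-1}$ in the kernel $\mathbf{K}_\alpha$ so that the integral estimates stay finite and the equicontinuity of $\mathbf{Q}(\mathfrak{D})$ holds uniformly up to the endpoints of each subinterval.

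Assembling the three pieces, the second entry of the maximum in Eq.(\ref{eq8}) yields $\mu_{\mathbf{PC}_{1-\gamma}}(\mathbf{Q}(\mathfrak{D}))\le \mathbf{M}(\mathbf{K}+4\mathbf{KL})\,\mu_{\mathbf{PC}_{1-\gamma}}(\mathfrak{D})$, so $\mathbf{Q}$ is $k$-set-contractive on the bounded closed convex set $\Lambda_r$ with $k=\mathbf{M}\max\{\rho\Lambda+\mathbf{K},\mathbf{K}+4\mathbf{KL}\}<1$. Lemma~\ref{lem2} then furnishes a fixed point $u\in\Lambda_r$, which is the desired $\mathbf{PC}_{1-\gamma}$-mild solution of Eq.(\ref{eq1}), completing the argument.
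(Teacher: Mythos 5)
Your proposal follows essentially the same route as the paper's proof: the same piecewise fixed-point operator, the self-mapping bound obtained by contradiction using $\liminf_{r\to\infty}\psi(r)/r=\rho$, continuity of the integral part via the Lebesgue dominated convergence theorem, equicontinuity of the integral part from the equicontinuity of the semigroup, the noncompactness-measure estimate via Lemma~\ref{lem3}, Lemma~\ref{lem4}, Lemma~\ref{lem5} and (A3), and the fixed point from Lemma~\ref{lem2}. The only structural difference is cosmetic: the paper splits the operator as a sum $\mathfrak{F}=\mathfrak{F}_{1}+\mathfrak{F}_{2}$ and combines the two measure estimates with property (6) of Lemma~\ref{lem1}, whereas you combine interval-by-interval contributions with property (5); these are interchangeable here.

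There is, however, one concrete gap, and it sits exactly at the step you pass over quickly. You claim that the two factors of $2$ (from Lemma~\ref{lem3} and Lemma~\ref{lem4}) together with (A3) ``produce the term $4\mathbf{KL}$'' and hence the contraction constant $\mathbf{M}\left(\mathbf{K}+4\mathbf{KL}\right)$, matching the second entry of the maximum in Eq.(\ref{eq8}). The arithmetic does not give this: Lemma~\ref{lem3} contributes a factor $2$, Lemma~\ref{lem4} another factor $2$, (A3) together with the interval length contributes $\mathbf{L}_{k}\left(t_{k+1}-s_{k}\right)\leq\mathbf{L}$, and the Lipschitz part contributes $\mathbf{M}\mathbf{K}$ \emph{additively}; the total is $\mathbf{M}\left(\mathbf{K}+4\mathbf{L}\right)$, and there is no source anywhere for an extra factor $\mathbf{K}$ multiplying $\mathbf{L}$. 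This is precisely what the paper's own computation yields in Eq.(\ref{eq27}). The discrepancy matters: Eq.(\ref{eq8}) forces $\mathbf{K}<1$ (since $\mathbf{M}\geq 1$ and $\mathbf{M}\mathbf{K}\leq\mathbf{M}\left(\rho\Lambda+\mathbf{K}\right)<1$), so $\mathbf{K}+4\mathbf{KL}<\mathbf{K}+4\mathbf{L}$ whenever $\mathbf{L}>0$, and the stated hypothesis $\mathbf{M}\left(\mathbf{K}+4\mathbf{KL}\right)<1$ does \emph{not} imply the inequality $\mathbf{M}\left(\mathbf{K}+4\mathbf{L}\right)<1$ that the derived estimate actually requires. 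The paper has the same unresolved mismatch (Eq.(\ref{eq27}) is silently matched against Eq.(\ref{eq8})), but in your write-up it appears as an unjustified assertion: your final sentence declaring $\mathbf{Q}$ to be $k$-set-contractive with $k=\mathbf{M}\max\left\{\rho\Lambda+\mathbf{K},\mathbf{K}+4\mathbf{KL}\right\}<1$ does not follow from the estimates you describe. To close the argument honestly you must either strengthen the hypothesis to $\mathbf{M}\max\left\{\rho\Lambda+\mathbf{K},\mathbf{K}+4\mathbf{L}\right\}<1$ (equivalently, treat $4\mathbf{KL}$ in Eq.(\ref{eq8}) as a typo for $4\mathbf{L}$) or produce a genuinely sharper measure estimate for the integral term.
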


\begin{proof} First, we define the following operator $\mathfrak{F}$ on $\mathbf{PC}_{1-\gamma }\left(J,\Omega \right) $ given by
\begin{equation}\label{eq9}
\left( \mathfrak{F}u\right) \left( t\right) =\left( \mathfrak{F}_{1}u\right) \left( t\right) +\left( \mathfrak{F}_{2}u\right) \left( t\right) 
\end{equation}
where
\begin{equation}\label{eq10}
\left( \mathfrak{F}_{1}u\right) \left( t\right) =\left\{ 
\begin{array}{ll}
\mathbb{P}_{\alpha ,\beta }\left( t\right) u_{0}, & t\in \left[ 0,t_{1}\right]  \\ 
\zeta_{k}\left( t,u\left( t\right) \right),  & t\in \left( t_{k},s_{k} \right] ,\text{ }k=1,2,...,m \\ 
\mathbb{P}_{\alpha ,\beta }\left( t\right) \zeta_{k}\left( s_{k},t_{k+1}\right),  & t\in \left( s_{k},t_{k+1}\right] ,\text{ }k=0,1,...,m
\end{array}%
\right.
\end{equation}%
and
\begin{equation}\label{eq11}
\left( \mathfrak{F}_{2}u\right) \left( t\right) =\left\{ 
\begin{array}{ll}
\displaystyle\int_{0}^{t}\mathbf{K}_{\alpha }\left( t-s\right) f\left( s,u\left( s\right) \right) ds, & t\in \left[ 0,t_{1}\right]  \\ 
0, & t\in \left( t_{k},s_{k}\right] ,\text{ }k=1,2,...,m \\ 
\displaystyle\int_{s_{k}}^{t}\mathbf{K}_{\alpha }\left( t-s\right) f\left( s,u\left( s\right) \right) ds, & t\in \left( s_{k},t_{k+1}\right] ,\text{ }k=0,1,...,m.
\end{array}
\right. 
\end{equation}

Note that $\mathfrak{F}$ is well defined and that $\mathbf{PC}_{1-\gamma}-$mild solution of Eq.(\ref{eq1}) is equivalent to the fixed point of operator $\mathfrak{F}$ defined by Eq.(\ref{eq9}). Now, the objective is to prove that the operator $\mathfrak{F}$ admits at least one fixed point. The proof will be carried out in four steps.

\textbf{Step I:} $\mathfrak{F}u\in \mathbf{PC}_{1-\gamma }\left( J,\Omega \right)$, $\forall u\in \mathbf{PC}_{1-\gamma }\left( J,\Omega \right) $. 

Suppose that $0\leq \tau \leq t\leq t_{1}$, then by means of the strongly continuity of the semigroup $\mathbb{P}_{\alpha ,\beta }\left( t\right) \left( t\geq 0\right) $ Eq.(\ref{eq2}) and Eq.(\ref{eq9}), we get
\begin{eqnarray*}
&&\left\Vert t^{1-\gamma }\left[ \left( \mathfrak{F}u\right) \left( t\right) -\left( \mathfrak{F}u\right) \left( \tau \right) \right] \right\Vert   \notag \\
&\leq &\left\Vert t^{1-\gamma }\left[ \mathbb{P}_{\alpha ,\beta }\left( t\right)
u_{0}-\mathbb{P}_{\alpha ,\beta }\left( \tau \right) u_{0}\right] \right\Vert   \notag
\\
&&+\left\Vert t^{1-\gamma }\left[ \int_{0}^{t}\mathbf{K}_{\alpha }\left( t-s\right)f\left( s,u\left( s\right) \right) ds-\int_{0}^{\tau }\mathbf{K}_{\alpha }\left( \tau -s\right) f\left( s,u\left( s\right) \right) ds\right] \right\Vert   \notag
\\
&=&\left\Vert t^{1-\gamma }\left[ \mathbb{P}_{\alpha ,\beta }\left( t\right)u_{0}-\mathbb{P}_{\alpha ,\beta }\left( \tau \right) u_{0}\right] \right\Vert   \notag
\\
&&+\left\Vert t^{1-\gamma }\left[ \int_{\tau }^{t}\mathbf{K}_{\alpha }\left(t-s\right) f\left( s,u\left( s\right) \right) ds+\int_{0}^{\tau }\left(\mathbf{K}_{\alpha }\left( t-s\right) -\mathbf{K}_{\alpha }\left( \tau -s\right) \right)
f\left( s,u\left( s\right) \right) ds\right] \right\Vert   \notag \\
&\leq &\left\Vert t^{1-\gamma }\left[ \mathbb{P}_{\alpha ,\beta }\left( t\right)u_{0}-\mathbb{P}_{\alpha ,\beta }\left( \tau \right) u_{0}\right] \right\Vert +\underset{t\in J}{\sup }\left\Vert \mathbf{K}_{\alpha }\left( t-s\right) \right\Vert
\left\Vert t^{1-\gamma }\int_{\tau }^{t}f\left( s,u\left( s\right) \right)ds\right\Vert   \notag \\
&&+\left\Vert t^{1-\gamma }\int_{0}^{\tau }\left( \mathbf{K}_{\alpha }\left(t-s\right) -\mathbf{K}_{\alpha }\left( \tau -s\right) \right) f\left( s,u\left(s\right) \right) ds\right\Vert   \notag \\
&\leq &\mathbf{M}\left\Vert t^{1-\gamma }\mathbb{P}_{\alpha ,\beta }\left( t-s\right)
u_{0}\right\Vert +\mathbf{M}\left\Vert t^{1-\gamma }\int_{\tau }^{t}f\left( s,u\left(
s\right) \right) ds\right\Vert   \notag \\
&&+\left\Vert t^{1-\gamma }\int_{0}^{\tau }\left( \mathbf{K}_{\alpha }\left( t-\tau \right) \mathbf{K}_{\alpha }\left( \tau -s\right) f\left( s,u\left( s\right) \right) -\mathbf{K}_{\alpha }\left( t-\tau \right) \right) f\left( s,u\left( s\right) \right) ds\right\Vert \left. \rightarrow 0\right. ,
\end{eqnarray*}
as $t\rightarrow \tau$.

In this sense, it follows that $\mathfrak{F}u\in C_{1-\gamma }\left( \left[ 0,t_{1}\right] ,\Omega \right)$. Now it is necessary to check for the other intervals, i.e., $\mathfrak{F}u\in C_{1-\gamma }\left( \left( t_{k},s_{k}\right] ,\Omega \right)$ and $\mathfrak{F}u\in C_{1-\gamma }\left( \left( s_{k},t_{k+1}\right] ,\Omega \right)$, for every $k=1,2,...,m$.

Note that, by means of Eq.(\ref{eq8}) and the continuity of the non-instantaneous impulsive functions $\zeta_{k}\left( t,u\left( t\right) \right)$ with $k=1,2,...,m$, it is easy to know that $\mathfrak{F}u\in C_{1-\gamma }\left( \left( t_{k},s_{k}\right] ,\Omega \right) $, is in fact similar to the proof of continuity carried out above $\left( \mathfrak{F}u\right) \left( t\right) $, for $t\in\left[ 0,t_{1}\right] $, we can prove that $\mathfrak{F}u\in C_{1-\gamma }\left( \left( s_{k},t_{k+1}\right] ,\Omega \right) $, for $\ k=1,2,...,m$. Thus, we conclude that $\mathfrak{F}u\in \mathbf{PC}_{1-\gamma }\left( J,\Omega \right) $ for $u\in \mathbf{PC}_{1-\gamma }\left( J,\Omega \right) $, i.e., $\mathfrak{F}:\mathbf{PC}_{1-\gamma }\left( J,\Omega \right) \rightarrow \mathbf{PC}_{1-\gamma }\left( J,\Omega \right)$. 

\textbf{Step II:} $\exists \mathbf{R}>0$; $\mathfrak{F}\left( \widetilde{\Omega }_{\mathbf{R}}\right) \subset \widetilde{\Omega }_{\mathbf{R}}$.

For this step, will be carried out by contradiction. Suppose that not true, i.e., there is no $\mathbf{R}>0$ such that $\mathfrak{F}\left( \widetilde{\Omega }_{\mathbf{R}}\right) \subset \widetilde{\Omega }_{\mathbf{R}}$, then  in this sense for each $r>0$, $\exists u_{r}\in \Delta _{r}$ and $t_{r}\in J,$ such that $\left\Vert \mathfrak{F}u_{r}\right\Vert _{C_{1-\gamma }}>r$. Now we need to evaluate $t_{r}$ in the intervals $\left[ 0,t_{1}\right]$, $\left( t_{k},s_{k}\right]$ and $\left( s_{k},t_{k+1}\right]$. Thus, we have:

If $t_{r}\in \left[ 0,t_{1}\right]$, then by Eq.(\ref{eq2}) and Eq.(\ref{eq9}) and (A1), we obtain
\begin{eqnarray}\label{eq12}
\left\Vert t^{1-\gamma }\left[ \left( \mathfrak{F}u_{r}\right) \left( t_{r}\right)\right] \right\Vert  &=&\left\Vert t^{1-\gamma }\left[ \mathbb{P}_{\alpha ,\beta }\left( t_{r}\right) u_{0}+\int_{0}^{t_{r}}\mathbf{K}_{\alpha }\left( t_{r}-s\right)
f\left( s,u_{r}\left( s\right) \right) ds\right] \right\Vert   \notag \\
&\leq &\left\Vert t^{1-\gamma }\mathbb{P}_{\alpha ,\beta }\left( t_{r}\right) u_{0}\right\Vert +\left\Vert t^{1-\gamma }\int_{0}^{t_{r}}\mathbf{K}_{\alpha }\left( t_{r}-s\right) f\left( s,u_{r}\left( s\right) \right) ds\right\Vert   \notag \\
&\leq &\mathbf{M}\left\Vert t^{1-\gamma }u_{0}\right\Vert +\mathbf{M}\left\Vert t^{1-\gamma }\right\Vert \left\Vert \int_{0}^{t_{r}}s^{\gamma -1}\varphi \left( s\right) \Psi \left( \left\Vert u\right\Vert _{C_{1-\gamma }}\right) ds\right\Vert  \notag \\ &\leq &\mathbf{M}\left\Vert u_{0}\right\Vert _{C_{1-\gamma }}+\mathbf{M}\Psi \left( r\right) \left\Vert \varphi \right\Vert _{\mathcal{L}\left[ 0,t_{1}\right] }.
\end{eqnarray}

If $t_{r}\in \left( t_{k},s_{k}\right]$, $k=1,2,...,m,$ then by Eq.(\ref{eq2}), Eq.(\ref{eq9}) and (A2), we get
\begin{eqnarray}\label{eq13}
\left\Vert t^{1-\gamma }\left[ \left( \mathfrak{\mathfrak{F}}u_{r}\right) \left( t_{r}\right) \right] \right\Vert  &\leq &\mathbf{K}_{\zeta_{k}}\left\Vert u_{r}\left( t_{r}\right) \right\Vert _{C_{1-\gamma }}  \notag \\
&\leq &\mathbf{K}_{\zeta_{k}}\left\Vert u_{r}\left( t_{r}\right) \right\Vert _{C_{1-\gamma }}+\left\Vert \zeta_{k}\left( t_{r},\theta \right) \right\Vert _{C_{1-\gamma }}  \notag \\
&\leq &\mathbf{K}_{\zeta_{k}}+\mathbf{N} 
\end{eqnarray}
where 
\begin{equation*}
\mathbf{N}=\underset{k=1,2,...,m}{\max }\underset{t\in J}{\sup }\left\Vert \zeta_{k}\left( t_{r},\theta \right) \right\Vert _{C_{1-\gamma }}.
\end{equation*}

On the other hand, if $t_{r}\in \left( s_{k},t_{k+1}\right]$, $k=1,2,...,m,$ then by Eq(\ref{eq2}), Eq.(\ref{eq9}) and (A1) and (A2), we have
\begin{eqnarray}\label{eq14}
&&\left\Vert t^{1-\gamma }\left[ \left( \mathfrak{F}u_{r}\right) \left( t_{r}\right)\right] \right\Vert  \notag\\
&=&\left\Vert t^{1-\gamma }\mathbb{P}_{\alpha ,\beta }\left( t_{r}\right) \zeta_{k}\left( s_{k},u\left( s_{k}\right) \right)
+t^{1-\gamma }\int_{s_{k}}^{t_{r}}\mathbf{K}_{\alpha }\left( t_{r}-s\right) f\left( s,u_{r}\left( s\right) \right) ds\right\Vert   \notag \\
&\leq &\mathbf{M}\left( \mathbf{K}_{\zeta_{k}}\left\Vert u_{r}\left( s_{r}\right) \right\Vert _{C_{1-\gamma }}+\left\Vert \zeta_{k}\left( t_{r},\theta \right) \right\Vert _{C_{1-\gamma }}\right) +\left\Vert t^{1-\gamma
}\int_{s_{k}}^{t_{r}}\mathbf{K}_{\alpha }\left( t_{r}-s\right) f\left( s,u_{r}\left( s_{r}\right) \right) ds\right\Vert   \notag \\ 
&\leq &\mathbf{M}\left( \mathbf{K}_{\zeta_{k}}+\mathbf{N}\right) +\mathbf{M}t^{1-\gamma }\int_{s_{k}}^{t_{r}}s^{\gamma -1}\left\Vert s^{1-\gamma }f\left( s,u_{r}\left( s_{r}\right) \right) ds\right\Vert   \notag \\ 
&\leq &\mathbf{M}\left( \mathbf{K}_{\zeta_{k}}+\mathbf{N}\right) +\mathbf{M}\Psi \left( r\right) \left\Vert \varphi \right\Vert _{\mathcal{L}\left[ s_{k},t_{k+1}\right] }.
\end{eqnarray}

By means of Eq.(\ref{eq2}), Eq.(\ref{eq6}), Eq.(\ref{eq9}), Eq.(\ref{eq12})-Eq.(\ref{eq14}) and using the fact $r<\left\Vert \mathfrak{F}u_{r}\right\Vert$, we have
\begin{equation}\label{eq15}
r<\left\Vert t^{1-\gamma }\left( \mathfrak{F}u_{r}\right) \left( t_{r}\right) \right\Vert \leq \mathbf{M}\left( \left\Vert u_{0}\right\Vert _{C_{1-\gamma }}+\Psi \left( r\right) \Lambda +\mathbf{K}_{r}+\mathbf{N}\right) 
\end{equation}
where $\Lambda =\underset{k=0,1,...,m}{\max }\left\Vert \varphi \right\Vert_{L\left[ s_{k},t_{k+1}\right] }.$

Finally, we divide both sides of Eq.(\ref{eq15}) by $r$ and taking the limit as $r\rightarrow \infty $, we obtain
\begin{eqnarray*}
1 &<&\frac{\left\Vert t^{1-\gamma }\left( \mathfrak{F}u_{r}\right) \left( t_{r}\right)
\right\Vert }{r} \leq \frac{\mathbf{M}}{r}\left( \left\Vert u_{0}\right\Vert _{C_{1-\gamma }}+\Psi
\left( r\right) \Lambda +\mathbf{K}_{r}+\mathbf{N}\right) =\mathbf{M}\left( \rho \Lambda +\mathbf{K}\right).
\end{eqnarray*}

Note that, is a contradiction the Eq.(\ref{eq8}). Therefore, we conclude the proof.

\textbf{Step III:} $\mathfrak{F}_{1}:\widetilde{\Omega }_{\mathbf{R}}\rightarrow \widetilde{\Omega }_{\mathbf{R}}$ and $\mathfrak{F}_{2}:\widetilde{\Omega }_{\mathbf{R}}\rightarrow \widetilde{\Omega }_{\mathbf{R}}$ are Lipschitz continuous.

Then, for $t\in \left( s_{k},t_{k+1}\right]$, $k=1,2,...,m$ and $u,v\in \widetilde{\Omega }_{\mathbf{R}}$ by Eq.(\ref{eq10}) and (A2), we obtain
\begin{eqnarray}\label{eq16}
\left\Vert t^{1-\gamma }\left[ \left( \mathfrak{F}_{1}u\right) \left( t\right) -\left( \mathfrak{F}_{1}v\right) \left( t\right) \right] \right\Vert  &=&\left\Vert t^{1-\gamma }\mathbb{P}_{\alpha ,\beta }\left( t\right) \left[ \zeta_{k}\left( s_{k},u\left( s_{k}\right) \right) -\zeta_{k}\left( s_{k},v\left( s_{k}\right) \right) \right] \right\Vert   \notag \\ 
&\leq &\mathbf{M}\left\Vert t^{1-\gamma }\left[ \zeta_{k}\left( s_{k},u\left(s_{k}\right) \right) -\zeta_{k}\left( s_{k},v\left( s_{k}\right) \right) \right] \right\Vert   \notag \\
&\leq &\mathbf{M}\mathbf{K}_{\zeta_{k}}\left\Vert u-v\right\Vert _{\mathbf{PC}_{1-\gamma }.\text{ }}
\end{eqnarray}

On the other hand, for $t\in \left( s_{k},t_{k}\right]$, $k=1,2,...,m$ and $u,v\in \widetilde{\Omega }_{\mathbf{R}}$, by Eq.(\ref{eq10}) and (A2), we get
\begin{eqnarray}\label{eq17}
\left\Vert t^{1-\gamma }\left[ \left( \mathfrak{F}_{1}u\right) \left( t\right) -\left( \mathfrak{F}_{1}v\right) \left( t\right) \right] \right\Vert  &\leq &\mathbf{K}_{\zeta_{k}}\left\Vert \left[ \zeta_{k}\left( t,u\right) -\zeta_{k}\left( t,v\right) \right] \right\Vert _{C_{1-\gamma }}  \notag \\
&\leq &\mathbf{K}_{\zeta_{k}}\left\Vert u-v\right\Vert _{\mathbf{PC}_{1-\gamma }\text{ }}.
\end{eqnarray}

By means of Eq.(\ref{eq16}), Eq.(\ref{eq17}), Eq.(\ref{eq2}) and Eq.(\ref{eq6}), we have
\begin{eqnarray}\label{eq18}
\left\Vert \mathfrak{F}_{1}u-\mathfrak{F}_{1}v\right\Vert _{\mathbf{PC}_{1-\gamma }} &\leq &\mathbf{M}\mathbf{K}_{\zeta_{k}}\left\Vert u-v\right\Vert _{\mathbf{PC}_{1-\gamma }}+\mathbf{K}_{\zeta_{k}}\left\Vert u-v\right\Vert _{\mathbf{PC}_{1-\gamma }}  \notag \\
&\leq &\mathbf{M}\underset{k=1,2,...,m}{\max }\mathbf{K}_{\zeta_{k}}\left\Vert u-v\right\Vert_{\mathbf{PC}_{1-\gamma }}  \notag \\
&=&\mathbf{M}\mathbf{K}\left\Vert u-v\right\Vert _{\mathbf{PC}_{1-\gamma }}.
\end{eqnarray}

Therefore $\mathfrak{F}_{1}$ is continuous in $\widetilde{\Omega }_{\mathbf{R}}$. Now, we prove that $\mathfrak{F}_{2}$ is continuous in $\widetilde{\Omega }_{\mathbf{R}}$. Consider $u_{n}\in \widetilde{\Omega }_{\mathbf{R}}$ be a sequence such that $\underset{n\rightarrow \infty }{\lim }u_{n}=u$ in $\widetilde{ \Omega }_{\mathbf{R}}.$ By the continuity of nonlinear term $f$ with respect to the second variable, for $s\in J$, we get
\begin{equation}\label{eq19}
\underset{n\rightarrow \infty }{\lim }f\left( s,u_{n}\left( s\right) \right) =f\left( s,u\left( s\right) \right).
\end{equation}

Using (A1), we have
\begin{eqnarray}\label{eq20}
\left\Vert t^{1-\gamma }\left[ f\left( s,u_{n}\left( s\right) \right) -f\left( s,u\left( s\right) \right) \right] \right\Vert  &\leq &\left\Vert f\left( s,u_{n}\right) \right\Vert _{C_{1-\gamma }}+\left\Vert f\left(
s,u\right) \right\Vert _{C_{1-\gamma }}  \notag \\
&\leq &\varphi \left( s\right) \Psi \left( \left\Vert u_{n}\right\Vert_{C_{1-\gamma }}\right) +\varphi \left( s\right) \Psi \left( \left\Vert u\right\Vert _{C_{1-\gamma }}\right)   \notag \\
&=&2\varphi \left( s\right) \Psi \left( \mathbf{R}\right),
\end{eqnarray}
for $s\in J$.

As $s\rightarrow 2\varphi \left( s\right) \Psi \left( \mathbf{R}\right) $ is Lebesgue integrable for $s\in \left[ s_{k},t\right] $ and $t\in \left( s_{k},t_{k+1}\right] ,k=0,1,...,m$, then by means of the Eq.(\ref{eq2}), Eq.(\ref{eq11}), Eq.(\ref{eq19}), Eq.(\ref{eq20}) and the Lebesgue dominated convergence theorem, we obtain
\begin{eqnarray}\label{eq21}
\left\Vert t^{1-\gamma }\left[ \left( \mathfrak{F}_{2}u_{n}\right) \left( t\right) -\left( \mathfrak{F}_{2}u\right) \left( t\right) \right] \right\Vert  &\leq &t^{1-\gamma }\int_{s_{k}}^{t}\mathbf{K}_{\alpha }\left( t-s\right) s^{\gamma
-1}\left\Vert s^{1-\gamma }\left[ f\left( s,u_{n}\left( s\right) \right) -f\left( s,u\left( s\right) \right) \right] \right\Vert ds  \notag \\
&\leq &\mathbf{M}\int_{s_{k}}^{t}\left\Vert s^{1-\gamma }\left[ f\left( s,u_{n}\left( s\right) \right) -f\left( s,u\left( s\right) \right) \right] \right\Vert ds\rightarrow 0
\end{eqnarray}
as $n\rightarrow \infty $. 

Then, $\left\Vert \mathfrak{F}_{2}u_{n}-\mathfrak{F}_{2}u\right\Vert _{\mathbf{PC}_{1-\gamma }}\rightarrow 0$ as $n\rightarrow \infty $ which means that $\mathfrak{F}_{2}$ defined by Eq.(\ref{eq11}) is continuous in $\widetilde{\Omega }_{\mathbf{R}}$.

\textbf{Step IV:} $\mathfrak{F}_{2}:\widetilde{\Omega }_{\mathbf{R}}\rightarrow \widetilde{\Omega }_{\mathbf{R}}$ is equicontinuous. 

Then, for any $u\in \widetilde{\Omega }_{\mathbf{R}}$ and $s_{k}<t_{1}<t_{2}\leq t_{k+1}$ for $k=0,1,...,m
$, we have
\begin{eqnarray*}
\left\Vert t^{1-\gamma }\left[ \left(\mathfrak{F}_{2}u\right) \left( t_{2}\right) -\left( \mathfrak{F}_{2}u\right) \left( t_{1}\right) \right] \right\Vert  &\leq &\left\Vert \int_{t_{1}}^{t_{2}}\mathbf{K}_{\alpha }\left( t_{2}-s\right) f\left( s,u\left( s\right) \right) ds\right\Vert _{C_{1-\gamma }}\notag \\&&+\left\Vert\int_{s_{k}}^{t_{1}}\left( \mathbf{K}_{\alpha }\left( t_{2}-s\right) -\mathbf{K}_{\alpha }\left( t_{1}-s\right) \right) f\left( s,u\left( s\right) \right) ds\right\Vert _{C_{1-\gamma }}  \notag \\
&&:=I_{1}+I_{2}
\end{eqnarray*}
where $I_{1}:=\left\Vert \displaystyle\int_{t_{1}}^{t_{2}}\mathbf{K}_{\alpha }\left( t_{2}-s\right) f\left( s,u\left( s\right) \right) ds\right\Vert _{C_{1-\gamma }}$ and \\$I_{2}:=\left\Vert \displaystyle\int_{s_{k}}^{t_{1}}\left(\mathbf{K}_{\alpha }\left( t_{2}-s\right) -\mathbf{K}_{\alpha }\left( t_{1}-s\right) \right) f\left( s,u\left( s\right) \right) ds\right\Vert _{C_{1-\gamma }}$.

Now, we check $I_{1}$ and $I_{2}$ tend to $0$ independently of $u\in \widetilde{\Omega }_{\mathbf{R}}$ when $t_{2}-t_{1}\rightarrow 0$. By means of Eq.(\ref{eq2}) and (A1), we get
\begin{eqnarray*}
I_{1} &\leq &\int_{t_{1}}^{t_{2}}\left\Vert \mathbf{K}_{\alpha }\left( t_{2}-s\right) \right\Vert _{C_{1-\gamma }}\left\Vert f\left( s,u\left( s\right) \right) \right\Vert _{C_{1-\gamma }}ds  \notag \\
&\leq &\mathbf{M}\int_{t_{1}}^{t_{2}}\varphi \left( s\right) \Psi \left( \left\Vert u\right\Vert _{C_{1-\gamma }}\right) ds  \notag \\
&\leq &\mathbf{M}\Psi \left( \mathbf{R}\right) \int_{t_{1}}^{t_{2}}\varphi \left( s\right) ds\rightarrow 0
\end{eqnarray*}
as $t_{2}-t_{1}\rightarrow 0$.

Now, for $\varepsilon >0$ small enough and by means of the Eq.(\ref{eq2}), (A1), equicontinuity of the $C_{0}-$semigroup $\left( \mathbb{P}_{\alpha }\left( t\right) \right) _{t\geq 0}$ and the Lebesgue dominated convergence theorem, we get
\begin{eqnarray*}
I_{2} &\leq &\left\Vert \int_{t_{1}}^{t_{1}-\varepsilon }\left( \mathbf{K}_{\alpha }\left( t_{2}-s\right) -\mathbf{K}_{\alpha }\left( t_{1}-s\right) \right) ds\right\Vert_{C_{1-\gamma }} +\left\Vert \int_{t_{1}-\varepsilon }^{t_{1}}\left( \mathbf{K}_{\alpha
}\left( t_{2}-s\right) -\mathbf{K}_{\alpha }\left( t_{1}-s\right) \right) ds\right\Vert _{C_{1-\gamma }}  \notag \\
&\leq &\Psi \left( \mathbf{R}\right) \int_{s_{k}}^{t_{1}-\varepsilon }\left\Vert \mathbf{K}_{\alpha }\left( t_{2}-s\right) -\mathbf{K}_{\alpha }\left( t_{1}-s\right) \right\Vert _{C_{1-\gamma }}\varphi \left( s\right) ds+2\mathbf{M}\Psi \left(
\mathbf{R}\right) \int_{t_{1}-\varepsilon }^{t_{1}}\varphi \left( s\right) ds  \notag \\
&=&\Psi \left( \mathbf{R}\right) \int_{t\varepsilon }^{t_{1}-\varepsilon }\left\Vert \mathbf{K}_{\alpha }\left( t_{2}-t_{1}-s\right) -\mathbf{K}_{\alpha }\left( s\right) \right\Vert _{C_{1-\gamma }}\varphi \left( t_{1}-s\right) ds+2\mathbf{M}\Psi \left( \mathbf{R}\right) \int_{t_{1}-\varepsilon }^{t_{1}}\varphi \left( s\right) ds\rightarrow 0
\end{eqnarray*}
as $t_{2}-t_{1}\rightarrow 0$ and $\varepsilon \rightarrow 0$.

Note that the result, $\left\Vert t^{1-\gamma }\left[ \left( \mathfrak{F}_{2}u\right) \left( t_{2}\right) -\left( \mathfrak{F}_{2}u\right) \left( t_{1}\right) \right] \right\Vert _{C_{1-\gamma }} \rightarrow 0$ independently of $u\in \widetilde{\Omega }_{\mathbf{R}} $ as $t_{2}-t_{1}\rightarrow 0$, which means that $\mathfrak{F}_{2}:\widetilde{\Omega }_{\mathbf{R}}\rightarrow \widetilde{\Omega }_{\mathbf{R}}$ is equicontinuous.

For any bounded $\mathfrak{D}\subset \widetilde{\Omega }_{\mathbf{R}},$ by Lemma \ref{lem3}, we know that there exists a countable set $\mathfrak{D}_{0}=\left\{ u_{n}\right\} \subset \mathfrak{D},$ such that
\begin{equation}\label{eq22}
\mu \left( \mathfrak{F}_{2}\left( \mathfrak{D}\right) \right) _{\mathbf{PC}{1-\gamma }}\leq 2\mu \left( \mathfrak{F}_{2}\left( \mathfrak{D}_{0}\right) \right) _{\mathbf{PC}_{1-\gamma }}.
\end{equation}

Since $\mathfrak{F}_{2}\left( \mathfrak{D}_{0}\right) \subset \mathfrak{F}_{2}\left( \widetilde{\Omega }_{\mathbf{R}}\right) $ is bounded and equicontinuous, by means of the Lemma \ref{lem5}, we know that 
\begin{equation}\label{eq23}
\mu \left( \mathfrak{F}_{2}\left( \mathfrak{D}_{0}\right) \right) _{\mathbf{PC}_{1-\gamma }}=\underset{\underset{k=0,1,...,m}{t\in \left[ s_{k},t_{k+1}\right] }}{\max }\mu \left( \mathfrak{F}_{2}\left( \mathfrak{D}_{0}\right) \right) \left( t\right) .
\end{equation}

On the other hand, for $t\in \left[ s_{k},t_{k+1}\right]$ with $k=0,1,...,m$, using the Lemma \ref{lem4}, (A3) and Eq.(\ref{eq11}), we have
\begin{eqnarray}\label{eq24}
\mu \left( \mathfrak{F}_{2}\left( \mathfrak{D}_{0}\right) \right) \left( t\right)  &\leq &\mu \left( \left\{ \mathbf{M}\int_{s_{k}}^{t}f\left( s,u_{n}\left( s\right) \right) \right\} ds\right)   \notag \\
&\leq &2\mathbf{M}\int_{s_{k}}^{t}L_{k}\mu \left( u_{n}\left( s\right) \right) ds \notag \\
&\leq &2\mathbf{M}\mathbf{L}_{k}\mu \left( \mathfrak{D}\right) \int_{s_{k}}^{t}ds  \notag \\
&\leq &2\mathbf{M}\mathbf{L}_{k}\mu \left( \mathfrak{D}\right) _{\mathbf{PC}_{1-\gamma }}\left(t_{k+1}-s_{k}\right) .
\end{eqnarray}

Therefore, by means of the Eq.(\ref{eq8}), Eq.(\ref{eq22}), Eq.(\ref{eq23}) and Eq.(\ref{eq24}), we obtain
\begin{eqnarray}\label{eq25}
\mu \left( \mathfrak{F}_{2}\left( \mathfrak{D}_{0}\right) \left( t\right) \right)  &\leq &2\mathbf{M}\mathbf{L}_{k}\mu \left( \mathfrak{D}\right) _{\mathbf{PC}_{1-\gamma }}\left( t_{k+1}-s_{k}\right)  \notag \\
&\leq &4\mathbf{M}\mathbf{L}\mu \left( \mathfrak{D}\right) _{\mathbf{PC}_{1-\gamma }}.
\end{eqnarray}

From Eq.(\ref{eq18}) and Lemma \ref{lem1} (7), for any bounded $\mathfrak{D}\subset  \widetilde{\Omega }_{\mathbf{R}}$,  we know that is true
\begin{equation}\label{eq26}
\mu \left( \mathfrak{F}_{1}\left( \mathfrak{D}\right) \right) _{\mathbf{PC}_{1-\gamma }}\leq \mathbf{M}\mathbf{K}\mu \left(\mathfrak{D}\right) _{\mathbf{PC}_{1-\gamma }}.
\end{equation}

Using the Eq.(\ref{eq25}), Eq.(\ref{eq26}) and Lemma \ref{lem1} (6), we get
\begin{equation}\label{eq27}
\mu \left( \mathfrak{F}\left( \mathfrak{D}\right) \right) _{\mathbf{PC}_{1-\gamma }}\leq \mu \left(\mathfrak{F}_{1}\left( \mathfrak{D}\right) \right) _{\mathbf{PC}_{1-\gamma }}+\mu \left( \mathfrak{F}_{2}\left(\mathfrak{D}\right) \right) _{\mathbf{PC}_{1-\gamma }}\leq \mathbf{M}\left( \mathbf{K}+4\mathbf{L}\right) \mu \left(\mathfrak{D}\right) _{\mathbf{PC}_{1-\gamma }}.
\end{equation}

Now, combining the Eq.(\ref{eq27}), Eq.(\ref{eq8}) and Definition \ref{def3}, we have $\mathfrak{F}:\widetilde{\Omega }_{\mathbf{R}}\rightarrow \widetilde{\Omega }_{\mathbf{R}}$ is a $k$-set contractive. Thus, through Lemma \ref{lem2} has at least one fixed point $u\in \widetilde{\Omega }_{\mathbf{R}}$, which is just a $\mathbf{PC}_{1-\gamma }$ mild solution of Eq.(\ref{eq1}).

\end{proof}

%%%%%%%%%%%%%%%%%%%%%%%%%%%%%%%%%%%%%%%%%%%%%%%%%%%%%%%%%%%%%%%%%%%%%%%%%%%%%%%%%%%%%%%%%%%%%%%%%%%%%%%%%%%%%%%%%%%%%%%%%%%%%%%%%%%%%%%%%%%%%%%%%%%%%%%%%%%%%%%%%%%%%%%%%%%%%%%%%%%%%%%%%%%%%%%%
\section{Concluding remarks}

We conclude the paper, with the objective reached, i.e., we investigate the existence of a mild solution for a new class of semi-linear evolution fractional differential equations with non-instantaneous impulses in the sense of Hilfer in the Banach space by means of $C_{0}$-semigroup equicontinuous and the Lebesgue dominated convergence theorem. However, the following question is raised that is open: Will it be possible to investigate the existence and uniqueness of mild solutions of Eq.(\ref{eq1}) or another fractional differential equation in the sense $\psi$-Hilfer fractional derivative? Although Sousa and Oliveira \cite{der3} have recently introduced a version called a Leibniz type rule, the answer is initially no, since there is not yet an integral transform formulation, in particular the Laplace transform, to obtain a mild solution according to Definition 5, since it is necessary and sufficient condition to obtain the expression of the mild solution according to Eq.(\ref{eq1}). Research in this sense, has been developed and consequently contributes a lot to the area.

\section*{Acknowledgment}
{\bf We thank Prof. Dr. E. Capelas de Oliveira for fruitful discussions for suggesting several references on the subject. We also thank have been financially supported by PNPD-CAPES scholarship of the Pos-Graduate Program in Applied Mathematics IMECC-Unicamp.}

\bibliography{ref}
\bibliographystyle{plain}

\end{document}